\def\picwidth{4.8in}
\def\GL{\mathrm{GL}}
\def\Map{\mathrm{Map}}
\def\Out{\mathrm{Out}}
\def\Homeo{\mathrm{Homeo}}
\def\defn#1{\emph{#1}}
\def\Ftilde{\tilde{F}}
\def\Ghat{\hat{G}}
\def\Gtilde{\tilde{G}}
\def\atilde{\tilde{a}}
\def\btilde{\tilde{b}}
\def\ctilde{\tilde{c}}
\def\dtilde{\tilde{d}}
\def\ahat{\hat{a}}
\def\bhat{\hat{b}}
\def\chat{\hat{c}}
\def\dhat{\hat{d}}
\def\ghat{\hat{g}}
\def\xtilde{\tilde{x}}
\def\ytilde{\tilde{y}}
\def\Z{\mathbb{Z}}
\def\R{\mathbb{R}}
\def\Ker{\mathop{\mathrm{Ker}}\nolimits}
\let\iso\cong
\def\cong{\equiv}
\def\Aut{\mathop{\mathrm{Aut}}\nolimits}
\def\gend#1{\langle #1\rangle}
\def\sset{\subseteq}
\newtheorem{theorem}{Theorem}
\newtheorem{lemma}[theorem]{Lemma}
\newtheorem{corollary}[theorem]{Corollary}
\theoremstyle{remark}
\newtheorem{example}[theorem]{Example}
\numberwithin{theorem}{section}
\numberwithin{equation}{section}
\begin{document}

\title{Most big mapping class groups fail the Tits alternative}
\author{Daniel Allcock}
\thanks{Supported by Simons Foundation 
Collaboration Grant 429818}
\address{Department of Mathematics\\University 
of Texas, Austin}
\email{allcock@math.utexas.edu}
\urladdr{http://www.math.utexas.edu/\textasciitilde allcock}
%
\subjclass[2010]{2020 MSC: 57K20; 20F38}
\date{April 30, 2020}

\begin{abstract}
    Let $X$ be a surface, possibly with boundary. 
    Suppose it has infinite genus or infinitely many punctures,
    or a closed subset which is a disk with a Cantor set
    removed from its interior.
    For example, $X$ could be any surface of infinite type with
    only  finitely many 
    boundary components.
    We prove that the
    mapping class group of~$X$ 
    does not satisfy the Tits Alternative.  That is,
    $\Map(X)$ contains a finitely generated subgroup that 
    is not virtually solvable and contains no nonabelian free group.
\end{abstract}

\maketitle

\section{Introduction}
\label{SecIntro}

\noindent
Lanier and Loving gave examples of 
big mapping class groups that do not satisfy
the Tits Alternative, and asked whether 
the same holds for every
big mapping class group
\cite[Question 6]{LL}.  We show that few if any big mapping
class groups satisfy it:

\begin{theorem}
    \label{ThmMain}
    Suppose $X$ is a surface, possibly nonorientable and
    possibly with boundary. Also suppose that it
     satisfies one of the following:
    \begin{enumerate}
    \item
        \label{CaseGenus}
        $X$ has infinite genus;
    \item
        \label{CasePunctures}
        $X$ has infinitely many punctures;
    \item
        \label{CaseCantor}
        $X$ contains a closed subset homeomorphic to $D^2-C$, where $C$ is
            a  Cantor set in the interior of the $2$-disk.
    \end{enumerate}
    Then its mapping class group $\Map(X)$ does not satisfy the
    Tits Alternative.  That is, 
    $\Map(X)$ 
    has a finitely generated
    subgroup~$\Gtilde$, which 
    contains no nonabelian
    free group and no finite-index solvable
    subgroup.
\end{theorem}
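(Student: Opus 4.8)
The plan is to produce, in each of the three cases, a single finitely generated subgroup $\Gtilde \subgp \Map(X)$ that acts on an ``infinitely self-similar'' subsurface the way a self-similar (``branch-like'') group acts on a rooted tree; the absence of a nonabelian free subgroup will follow from local finiteness or from torsion, and the absence of a finite-index solvable subgroup from a finite-group-theoretic study of the finite quotients of $\Gtilde$. The first step is to locate inside $X$ a closed subsurface $Y$ built from infinitely many disjoint copies of a fixed finite-type ``cell'' $P$, glued along separating curves so that $Y$ is homeomorphic to a proper subsurface of itself --- a ``fattened rooted tree'' of cells. In case (iii) one may take $Y = D^2 - C$, with the cells at the vertices of a tree with space of ends $C$. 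In case (ii) one groups the punctures into cells, unless they accumulate somewhere, in which case a neighbourhood of an accumulation point already contains a copy of $D^2 - C$ and one is reduced to case (iii). In case (i) one groups the handles into cells; this is the one place the cases genuinely differ, since an infinite-genus surface can be one-ended, so that there is no Cantor set of ends on which to build a branching tree and $Y$ must instead be a \emph{linear} chain of cells, with $\Gtilde$ adjusted accordingly. Now $\Map(Y)$ maps naturally to $\Map(X)$, and in $\Map(Y)$ I take an order-$p$ mapping class $\phi$ realizing a rotational symmetry of the base cell together with finitely many further mapping classes defined recursively from the self-embedding of $Y$ (the recursion being that of branch groups, arranged so that $\Gtilde$ acts on the tree richly enough for the analysis below); let $\Gtilde$ be the group they generate, finitely generated by construction.

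\emph{No nonabelian free subgroup.} For the right choice of $P$ and generators, $\Gtilde$ is either a torsion group --- and then has no nonabelian free subgroup for the trivial reason that free groups are torsion-free --- or an extension of a locally finite group by a cyclic group, hence amenable, and again with no nonabelian free subgroup. Two further points need care. First, the homeomorphisms written down must represent \emph{pairwise distinct} mapping classes of $X$: this follows from their faithful action on the ends of $X$ --- for case (iii), on the Cantor set $C$ itself --- together with the absence of any spurious boundary-twisting, every generator being the identity outside a compact cell. Second, $\phi$ must have genuine finite order in $\Map(X)$, which is why $P$ is taken with an honest rotational symmetry rather than, say, a half-twist that merely permutes punctures inside a cell.

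\emph{No finite-index solvable subgroup.} It suffices to show $\Gtilde$ is not virtually solvable. Let $Q_n$ be the finite group through which $\Gtilde$ acts on the first $n$ levels of the tree of cells; then $Q_n$ is a subgroup of the $n$-fold iterated wreath product $\Z/p \wr \cdots \wr \Z/p$. The key claim is that \emph{for each fixed $m$ the derived length of index-$\le m$ subgroups of $Q_n$ tends to infinity with $n$}; granting it, a solvable subgroup $H \subgp \Gtilde$ of finite index $m$ and derived length $d$ would have image in every $Q_n$ of index $\le m$ and derived length $\le d$, which is impossible for large $n$. (If one could instead use the \emph{full} finitary symmetric group of the cells, the analogous claim would be immediate because $A_n$ sits inside $Q_n$; but realizing honest order-$2$ transpositions of cells as mapping classes --- especially for surfaces with boundary, or nonorientable ones --- is awkward, so the $p$-group version is preferable.) To prove the key claim I would argue inductively: by Sylow's theorem replace $Q_n$ by its Sylow $p$-subgroup $P$, then strip off one layer of the wreath product at a time by locating the characteristic elementary abelian subgroup $\OZJ(P) = \Omega_1(Z(J(P)))$ in the deepest layer, passing to the quotient, and checking that each step costs at most a bounded amount of index but exactly one unit of derived length; iterating $\Theta(n)$ times forces the derived length to grow with $n$. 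For odd $p$ this peeling is governed by Glauberman-type $\ZJ$-theorems, presumably the reason the argument is organized around an odd prime (the ``$o$''/``$e$'' bookkeeping) and around lexicographic orderings of the relevant chief series.

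\emph{The main obstacle.} Once the self-similar subsurface is in hand, the surface topology above is essentially routine, the only delicate point being the one-ended infinite-genus surfaces of case (i). The hard part is the finite-group theory: showing that the derived length of the tower quotients $Q_n$ cannot be collapsed by passing to a subgroup of bounded index. Equivalently one must control, \emph{uniformly in $n$}, how the Thompson subgroup $J(P)$ and the subgroup $\OZJ(P)$ sit relative to the layered structure of an $n$-fold iterated wreath product of $\Z/p$ --- genuinely delicate, since small-index subgroups of $p$-groups can be badly behaved --- and it is precisely this uniform control that the paper's machinery ($\Syl$, $\ZJ$, $\OZJ$, $\class$, $\exponent$, $\rank$, and the lexicographic orderings) is built to supply.
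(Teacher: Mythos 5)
Your outline never actually produces the group on which everything depends, and the two group-theoretic assertions you lean on are precisely the hard content. You say that ``for the right choice of $P$ and generators'' $\Gtilde$ is torsion (or locally-finite-by-cyclic), and that its level quotients $Q_n$ are ``rich enough'' that subgroups of index $\le m$ have derived length tending to infinity; neither is proved, and no candidate generators are written down. The paper avoids this entire program by taking the target group to be Grigorchuk's group $G=\gend{a,b,c,d}$: each generator is realized as a homeomorphism $\ghat$ supported on a subsurface glued from pairs of pants in the pattern of the rooted binary tree (swapping the two subtrees below each swap vertex), one checks that the resulting $\Gtilde\subgp\Map(X)$ surjects onto $G$ (via the action on ends, or on $H_1$), and that the kernel is \emph{abelian}, being supported on disjoint annular neighborhoods of the gluing curves. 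Then everything follows from the elementary facts that $G$ is finitely generated, infinite and torsion -- so it contains no free subgroup, and is not virtually solvable because a finitely generated solvable torsion group is finite -- together with a transfer lemma for surjections with solvable kernel. Note in particular that in any such realization $\Gtilde$ is \emph{not} torsion (the squares of the generators are products of Dehn twists about the gluing curves), so your hope of arranging honestly finite-order lifts is exactly the delicate point you would have to establish, and it is unnecessary once the abelian-kernel lemma is available. Your proposed route through iterated wreath products, Sylow subgroups and $\OZJ$-peeling is not carried out, and the preamble macros you cite as evidence of such machinery are unused in the paper.

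The case analysis also has genuine gaps. For case (i) you concede that a one-ended infinite-genus surface admits no branching tree of cells and propose a ``linear chain of cells, with $\Gtilde$ adjusted accordingly''; but a linear chain carries no tree action, so your whole strategy has nothing to act on there, and no adjustment is described. The paper handles all infinite-genus surfaces uniformly by embedding \emph{disjoint compact} subsurfaces $X_n$ of genus $2^n$, each decomposed as a depth-$n$ binary tree of pants capped by $2^n$ genus-one pieces, and letting each generator act on every $X_n$ simultaneously as it acts on the first $n+1$ levels of $T$; the surjection onto $G$ is then read off from the permutation action on $H_1$. For case (ii), your reduction ``if the punctures accumulate, pass to case (iii)'' fails: with infinitely many punctures they always accumulate at some end, and a flute-type surface (punctures accumulating to a single end, genus zero) has countable end space, so no Cantor set of ends is available near the accumulation point. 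The paper instead treats punctures directly, replacing the genus-one caps by once-punctured disks inside infinitely many disjoint disks, again acting on finite trees of growing depth.
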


\begin{corollary}
    \label{CorMain}
    Suppose $X$ is a surface of infinite type, with
    only finitely many boundary components.  Then  $\Map(X)$ 
    does not satisfy the Tits Alternative.
\end{corollary}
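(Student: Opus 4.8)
The plan is to deduce the corollary directly from Theorem~\ref{ThmMain}, by checking that a surface of infinite type with only finitely many boundary components must satisfy one of its three hypotheses. By the classification of noncompact surfaces (Ker\'ekj\'art\'o, Richards, together with its nonorientable analogue), a surface fails to be of finite type precisely when it has infinite genus, or infinitely many boundary components, or an infinite space of ends. Since $X$ has finitely many boundary components, either it has infinite genus, and then Theorem~\ref{ThmMain}(\ref{CaseGenus}) finishes the proof, or it has finite genus and an infinite space of ends $\Ends(X)$. Assume the latter from now on. The genus, being finite, is supported on a compact subsurface, outside of which $X$ is planar; in particular every end of $X$ is planar.

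The key point-set observation is a dichotomy for $E:=\Ends(X)$, which is a nonempty compact, totally disconnected, metrizable space that we are assuming is infinite: either $E$ has infinitely many isolated points, or $E$ contains a clopen subset homeomorphic to a Cantor set. Indeed, if the set $I$ of isolated points of $E$ is finite, then $I$ is clopen, so $E\setminus I$ is a nonempty clopen subset of $E$ with no isolated points, hence a nonempty perfect compact totally disconnected metrizable space --- that is, a Cantor set. In the first alternative, every isolated end of $X$ is planar by the previous paragraph, and all but the finitely many ends coming from noncompact boundary components have a once-punctured-disk neighborhood; so $X$ has infinitely many punctures and Theorem~\ref{ThmMain}(\ref{CasePunctures}) applies.

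It remains to treat the case that $\Ends(X)$ has a clopen subset $W$ homeomorphic to a Cantor set. Here I would use the standard fact that a clopen set of ends is cut off by a compact $1$-submanifold: choosing a term $K$ of a compact exhaustion of $X$ large enough to contain all the genus and the compact boundary components and to separate $W$ from its complement, and collecting the finitely many components of $X\setminus K$ whose ends lie in $W$, one obtains a closed planar subsurface $Z\sset X$, disjoint from $\partial X$, with finitely many boundary circles and $\Ends(Z)=W$. By Richards' classification, $Z$ is homeomorphic to a closed $2$-disk from whose interior finitely many disjoint open subdisks and a Cantor set $W'$ have been deleted, with $W'$ disjoint from the closures of those subdisks; and we are free to place $W'$ inside a single round disk $D^{*}$ of $Z$ missing those subdisks and $\partial Z$. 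Then $D^{*}\cap Z=D^{*}\setminus W'$ is a closed subset of $Z$, hence of $X$, homeomorphic to $D^2-C$ with $C$ a Cantor set in the interior of the disk, so Theorem~\ref{ThmMain}(\ref{CaseCantor}) applies. In every case $\Map(X)$ fails the Tits Alternative.

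The steps that require care, although routine, are: the reading of ``infinite genus'' for nonorientable surfaces (namely, containing infinitely many disjoint one-holed tori or Klein bottles, which is what the classification and Theorem~\ref{ThmMain} supply); the insistence on a \emph{clopen} Cantor set of ends in the dichotomy, since an arbitrary Cantor subset of $\Ends(X)$ need not be clopen --- the isolated ends may accumulate densely onto it --- and it is exactly clopenness that lets one cut off an honest $D^2-C$ rather than a disk minus a Cantor-set-with-extra-isolated-points; and the realization of a clopen set of ends by a closed subsurface with compact boundary. I expect this dichotomy-and-realization step to be the main, though still standard, obstacle.
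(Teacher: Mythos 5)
Your route is essentially the paper's: reduce to the trichotomy ``infinite genus, or infinitely many punctures, or a closed copy of $D^2-C$'' and quote Theorem~\ref{ThmMain}; the paper isolates this reduction as Theorem~\ref{ThmTrichotomy}, and your genus and puncture branches track its proof. The gap is in your Cantor branch. You take $W=E\setminus I$ (the non-isolated ends, a clopen Cantor set when $I$ is finite) and assert that you can cut off a closed planar subsurface $Z$, \emph{disjoint from $\partial X$}, with finitely many boundary circles and $\Ends(Z)=W$. Finitely many boundary components only makes $E\cap\overline{\partial X}$ finite; it does not make those ends isolated, so $W$ may contain ends lying in the closure of a \emph{noncompact} boundary component, and then no subsurface avoiding $\partial X$ has end space exactly $W$. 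Concretely, let $X$ be the closed half-plane with the closed set $\bigcup_{n\geq1}C_n$ deleted from its interior, where $C_n$ is a Cantor set in a small disk about $(n,1)$. Then $X$ has one boundary component and infinite type, every end is non-isolated (so $I=\emptyset$ and $W=E$), and the single end at infinity lies in $\overline{\partial X}$. Any union of components of $X\setminus K$ whose ends lie in $W$ contains an unbounded arc of $\partial X$, so your $Z$ is not disjoint from $\partial X$, is not a disk-minus-subdisks-minus-Cantor-set, and Richards' theorem (which in any case does not allow boundary --- precisely why the paper proves Theorem~\ref{ThmTrichotomy} by hand rather than citing a classification) does not apply to it.

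The repair is to shrink the target rather than realize all of $W$: since $I$ and $E\cap\overline{\partial X}$ are finite while $E$ is infinite, there is a non-isolated end $e\notin\overline{\partial X}$; choose an embedded circle in the planar part of $X$ separating $e$ from $\partial X$ and from the finitely many isolated ends, and let $D$ be the resulting piece containing $e$. Because $D$ is closed with compact frontier and misses $\partial X$, its end space is a clopen subset of $E$ contained in $E\setminus\bigl(I\cup\overline{\partial X}\bigr)$, hence perfect, hence a Cantor set, and planarity (via a pairs-of-pants subdivision, as in the paper, rather than via Richards) gives $D\iso D^2-C$. With that correction your argument coincides with the paper's proof of Theorem~\ref{ThmTrichotomy}. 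Your opening appeal to the classification of surfaces with boundary for ``infinite type implies infinite genus, infinitely many boundary components, or infinitely many ends'' is a further point where the paper deliberately avoids quoting the literature, but that is a citation issue; the boundary-end contamination of $W$ is the mathematical gap.
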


For us, every
surface $X$ is connected and second countable, and may have boundary.
When $X$ is non-orientable, its \emph{mapping class group} $\Map(X)$
is defined as the group of self-homeomorphisms of~$X$
that fix the boundary pointwise, modulo isotopies that
fix the boundary pointwise.  When $X$ is orientable we
use the same definition except that the self-homeomorphisms
are required to preserve orientation.
A surface~$X$ has \emph{finite 
type} or \emph{infinite type} according to whether its fundamental
group is finitely generated or not.  
A \emph{small} resp.\ \emph{big} mapping class group means the
mapping class group of a finite-type resp.\ infinite-type surface.
See \cite{FarbMargalit} 
for general background on mapping class groups,
and \cite{AV} for a survey of recent work on big mapping class groups.
Although $\Map(X)$ is a topological group, its topology
will play no role in this paper.

The \emph{Tits Alternative} is a famous property of $\GL_n(k)$
over any field~$k$,
discovered
by Tits \cite{Tits}.  
Namely, every finitely generated
subgroup  is either ``big'' 
(contains
a nonabelian free group) 
or ``small'' (contains  a finite-index solvable
subgroup).  
The point is that there is no ``medium''.
One expresses this by
saying that $\GL_n(k)$ satisfies the Tits Alternative.
The property makes sense with any 
group in place of $\GL_n(k)$, and if it holds then
we say that group satisfies the Tits Alternative.

Determining whether various groups satisfy the Tits Alternative
 is a major
thread in geometric group theory.
For example, it holds 
for the outer automorphism group of a finite-rank free group
\cite{BestvinaFeighnHandel1}\cite{BestvinaFeighnHandel2}, 
and for many Artin groups \cite{MartinPrzytycki}.
It also holds 
for small mapping class groups,
in fact Ivanov \cite{Ivanov} and McCarthy \cite{McCarthy} independently 
proved that 
these groups satisfy a ``strong Tits alternative''.
This is
defined the same way except with solvable  
replaced by abelian.  

Lanier
and Loving \cite{LL} showed that the mapping class group
of an infinite-type open surface cannot have this stronger
property.  They also gave examples of big mapping
class groups that do not satisfy even the classical Tits alternative.
The first such example, not noted at the time,
is probably due to Funar and
Kapoudjian.  They exhibited a big mapping class
group that contains Thompson's
group \cite[Prop.~2.4]{FK}\cite[Prop.~5.30]{AV}.
Further examples come from the work of
Aougab, Patel and Vlamis \cite{APV}, who constructed
surfaces~$X$ such that  every countable
group embeds in~$\Map(X)$.

The key idea in the proof of  Theorem~\ref{ThmMain} is
to construct $\Gtilde$
together with a surjection $\Gtilde\to G$, where $G$ is
Grigorchuck's group.
We do this in such a way that the kernel is abelian, which 
allows us
to transfer to~$\Gtilde$ the fact
that $G$ does not satisfy the Tits
Alternative.  
We review the essential properties of~$G$ in Section~\ref{SecGrigorchuk}.
Section~\ref{SecCantorSet} gives our
construction in case \eqref{CaseCantor} of the theorem, and Section~\ref{SecPuncturesOrHandles}
gives it for the other two cases.  Also see Section~\ref{SecPuncturesOrHandles} for
the definition of a puncture.  Section~\ref{SecMainTheorem}
shows that these three cases
are enough for Corollary~\ref{CorMain}, and makes a few remarks
about the cases left open.

We are grateful to Lanier and Loving for helpful correspondence
and discussions, for many references,
and for asking the question that inspired this work.

\section{Grigorchuk's group}
\label{SecGrigorchuk}

\noindent
In this section we recall Grigorchuk's famous group
and some of its many remarkable properties;
see \cite{Grigorchuk} for background. 
The properties we need are that
it is a group of automorphisms of a rooted
binary tree, and is 
finitely generated, infinite and torsion.  
Sadly,
we do not use its most fascinating property, that it
has intermediate growth.

We define $T$ as the tree whose vertices are the finite 
binary
words (sequences in $\{0,1\}$). 
We indicate the empty
word by $\emptyset$. 
The edges of $T$ are the following:
each word $w$ is joined to $w0$ and $w1$, which are called
the left and right \defn{children} of~$w$ respectively.  
The $n$th 
\defn{bit} of~$w$ means the $n$th term of the sequence~$w$.
This only makes sense when $w$ has at least~$n$ terms.

Grigorchuk's group $G$ is defined as the subgroup of $\Aut(T)$
generated by four specific transformations $a,b,c,d$.  Each of these
fixes the root $\emptyset$ of~$T$.  At other vertices the definitions
are recursive, expressed in terms of an arbitrary word $w$ 
(possibly empty):
\begin{center}
    \setlength{\tabcolsep}{0pt}
    \begin{tabular}{rlrlrlrl}
        $a(0w)$&${}=1w$
        &
        \qquad
        $b(0w)$&${}=0a(w)$
        &
        \qquad
        $c(0w)$&${}=0a(w)$
        &
        \qquad
        $d(0w)$&${}=0w$
        \\
        $a(1w)$&${}=0w$
        &
        $b(1w)$&${}=1c(w)$
        &
        $c(1w)$&${}=1d(w)$
        &
        $d(1w)$&${}=0b(w)$
    \end{tabular}
\end{center}

Flipping a bit in a word means changing that bit from $0$ to~$1$
or vice-versa.  So  $a$ acts by flipping
the first bit of  each nonempty word.
We can also describe $b,c,d$ in terms of bit-flipping.  Namely, $b$ acts on a word~$w$ by 
flipping the bit after the first $0$ in~$w$, just if the number of
initial $1$'s in~$w$ is $0$ or~$1$ mod~$3$.  (If $w$ has no
$0$'s, or no bits after its first~$0$, then $b$ fixes~$w$.)
For $c$ we use the same definition but with ``$0$ or~$1$'' replaced by
``$0$ or~$2$'', and similarly for $d$ using
``$1$ or~$2$''.
An induction justifies these descriptions,
pictured  in Figure~\ref{FigGrigorchuk}.

\def\triangleheight{2}
\def\trianglehalfwidth{.22}
\definecolor{treegray}{gray}{.8}%
\def\shadedtriangle#1#2{%
    \fill[treegray](#1,#2)
    --(-\trianglehalfwidth+#1,-\triangleheight+#2)
    --(\trianglehalfwidth+#1,-\triangleheight+#2)
    --cycle
    ;}
\def\noderadius{1.5pt}%
\def\leftsubtreefixed#1#2{%
    \fill(#1,#2)circle(\noderadius);       
    \shadedtriangle{-1+#1}{-1+#2}
    \draw[thick](#1,#2)--(-1+#1,-1+#2)
    --(-1-\trianglehalfwidth+#1,-1-\triangleheight+#2)
    --(-1+\trianglehalfwidth+#1,-1-\triangleheight+#2)--(-1+#1,-1+#2)
    ;}
\def\swapheight{2}
\def\swaphalfwidth{.28}
\def\arcradius{\swapheight}
\def\archalfangle{6}
\def\swappedtriangles#1#2{%
    \shadedtriangle{-\swaphalfwidth+#1}{-\swapheight+#2}
    \shadedtriangle{\swaphalfwidth+#1}{-\swapheight+#2}
    \draw[->](#1,-\arcradius+#2) arc (-90:-90+\archalfangle:\arcradius);
    \draw[->](#1,-\arcradius+#2) arc (-90:-90-\archalfangle:\arcradius);
    \draw[thick](-\swaphalfwidth+#1,-\swapheight+#2)
    --(-\swaphalfwidth+\trianglehalfwidth+#1,-\swapheight-\triangleheight+#2)
    --(-\swaphalfwidth-\trianglehalfwidth+#1,-\swapheight-\triangleheight+#2)
    --(-\swaphalfwidth+#1,-\swapheight+#2)
    --(#1,#2)
    --(\swaphalfwidth+#1,-\swapheight+#2)
    --(\swaphalfwidth+\trianglehalfwidth+#1,-\swapheight-\triangleheight+#2)
    --(\swaphalfwidth-\trianglehalfwidth+#1,-\swapheight-\triangleheight+#2)
    --(\swaphalfwidth+#1,-\swapheight+#2)
    ;}
\def\leftsubtreeswaps#1#2{%
    \fill(#1,#2)circle(\noderadius);
    \fill(-1+#1,-1+#2)circle(\noderadius);
    \draw[thick](0+#1,0+#2)--(-1+#1,-1+#2);
    \swappedtriangles{-1+#1}{-1+#2}%
    }
\def\rightedge{%
    \draw[thick](0,0)--(6.2,-6.2);
    \draw[thick,dashed](6.1,-6.1)--(7.2,-7.2);
    }
\def\xGrigorchuk{.44cm}
\def\yGrigorchuk{.22cm}
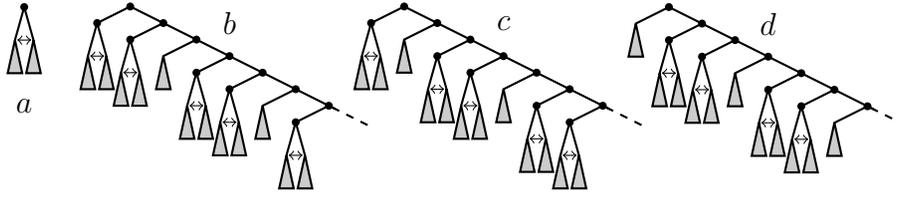
\begin{figure}%
    \begin{tikzpicture}[x=\xGrigorchuk,y=\yGrigorchuk]
        \draw[white](0,0)--(0,-7-\swapheight-\triangleheight);
        \fill(0,0)circle(\noderadius);       
        \swappedtriangles{0}{0}
        \node at (0,-6) {$a$};
    \end{tikzpicture}
    \kern10pt
    \begin{tikzpicture}[x=\xGrigorchuk,y=\yGrigorchuk]
        \draw[white](2,0)--(2,-7-\swapheight-\triangleheight);
        \rightedge
        \leftsubtreeswaps{0}{0}%
        \leftsubtreeswaps{1}{-1}%
        \leftsubtreefixed{2}{-2}%
        \leftsubtreeswaps{3}{-3}%
        \leftsubtreeswaps{4}{-4}%
        \leftsubtreefixed{5}{-5}%
        \leftsubtreeswaps{6}{-6}%
        \node at (3,-1) {$b$};
    \end{tikzpicture}
    \kern-10pt
    \begin{tikzpicture}[x=\xGrigorchuk,y=\yGrigorchuk]
        \draw[white](2,0)--(2,-7-\swapheight-\triangleheight);
        \rightedge
        \leftsubtreeswaps{0}{0}%
        \leftsubtreefixed{1}{-1}%
        \leftsubtreeswaps{2}{-2}%
        \leftsubtreeswaps{3}{-3}%
        \leftsubtreefixed{4}{-4}%
        \leftsubtreeswaps{5}{-5}%
        \leftsubtreeswaps{6}{-6}%
        \node at (3,-1) {$c$};
    \end{tikzpicture}
    \kern-10pt
    \begin{tikzpicture}[x=\xGrigorchuk,y=\yGrigorchuk]
        \draw[white](2,0)--(2,-7-\swapheight-\triangleheight);
        \rightedge
        \leftsubtreefixed{0}{0}%
        \leftsubtreeswaps{1}{-1}%
        \leftsubtreeswaps{2}{-2}%
        \leftsubtreefixed{3}{-3}%
        \leftsubtreeswaps{4}{-4}%
        \leftsubtreeswaps{5}{-5}%
        \leftsubtreefixed{6}{-6}%
        \node at (3,-1) {$d$};
    \end{tikzpicture}
    \caption{Generators for Grigorchuk's group~$G$.  Each $2$-headed 
    arrow represents the
    bodily exchange of the two indicated subtrees, drawn as shaded triangles.
    The other shaded triangles indicate pointwise-fixed subtrees.}
    \label{FigGrigorchuk}
\end{figure}

If $g\in\{a,b,c,d\}$ fixes a vertex~$v$ but exchanges the children
of~$v$, then we call $v$ a \defn{swap vertex} of~$g$.
One can also define the swap vertices
as the boundary points
of the fixed-point set
in~$T$.
The only swap vertex of~$a$ is $\emptyset$.  The swap vertices
of~$b$ are the words $1\cdots10$, where the number of $1$'s is $0$ or~$1$ mod~$3$.  
And similarly
for $c$ resp.\ $d$, with ``$0$ or~$1$'' replaced by ``$0$ or~$2$'' resp.\ ``$1$ or $2$''.

\begin{lemma}
    \label{LemGrigorchuk}
$G$ is not virtually solvable, and contains no nonabelian free group.
\end{lemma}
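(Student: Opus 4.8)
The plan is to cite the two well-known facts about Grigorchuk's group $G$ that together imply the statement, and then assemble them. First, $G$ is infinite and torsion: this is classical and sketched in the references (every generator has order~$2$, and one shows by an induction on word length that each element has finite order). An infinite torsion group cannot contain a nonabelian free group, since free groups are torsion-free; hence the second half of the lemma is immediate. It also cannot be virtually solvable for a soft reason: a finitely generated virtually solvable group that is torsion is finite (a finitely generated solvable torsion group is finite, by induction on the derived length using the fact that a finitely generated abelian torsion group is finite, together with the fact that in a solvable group the relevant subgroups are again finitely generated). Since $G$ is infinite, it is not virtually solvable. So modulo the torsion-and-infinite statement, the lemma is purely formal.

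Concretely, I would structure the proof as follows. Step one: recall that $a,b,c,d$ each have order~$2$ (clear from the bit-flipping descriptions, or from the recursion), so $G$ is generated by torsion elements; then invoke the standard result of Grigorchuk that $G$ is an infinite torsion group, citing \cite{Grigorchuk}. Step two: no nonabelian free group embeds in $G$ because such a group contains elements of infinite order while $G$ has none. Step three: suppose for contradiction that $G$ has a finite-index solvable subgroup $H$; then $H$ is finitely generated (finite index in a finitely generated group) and torsion (subgroup of a torsion group), and I would then argue that a finitely generated torsion solvable group is finite, contradicting $[G:H]<\infty$ together with $G$ infinite.

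For step three the only point needing care is the lemma that a finitely generated solvable torsion group $H$ is finite. I would prove this by induction on the derived length $n$ of $H$. If $n\le 1$ then $H$ is a finitely generated abelian torsion group, hence finite. If $n>1$, let $H'$ be the commutator subgroup; then $H/H'$ is a finitely generated abelian torsion group, hence finite, so $H'$ has finite index in $H$ and is therefore finitely generated; also $H'$ is torsion and has derived length $n-1$, so by induction $H'$ is finite; then $H$ is finite as an extension of a finite group by a finite group. This completes the argument.

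The main obstacle, such as it is, is simply that the deep input — that $G$ is infinite and torsion — is not something one reproves here; everything else is elementary group theory. So the ``hard part'' is really just deciding how much of Grigorchuk's classical analysis to quote versus sketch; I would quote it, since the excerpt has already announced that these are the properties of~$G$ we rely on and refers the reader to \cite{Grigorchuk} for background.
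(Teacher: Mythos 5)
Your proposal is correct and follows essentially the same route as the paper: the absence of nonabelian free subgroups comes from $G$ being torsion, and virtual solvability is ruled out by the same induction showing a finitely generated solvable torsion group is finite. No gaps.
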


\begin{proof}
    The second claim follows from the fact that $G$ is a torsion group.
    For the first, suppose $G$ contains a solvable subgroup~$F$ of finite
    index.  Having finite index in the finitely generated group~$G$,
    $F$ is also finitely generated.  Every finitely generated solvable torsion
    group is finite.  (The abelianization
    is finite, so
    the derived subgroup is finitely generated, hence finite
    by induction on solvable length.)  So $F$ is finite, which forces~$G$
    to be finite, which it is not.
\end{proof}

\section{If \texorpdfstring{$X$}{X} contains \texorpdfstring{$D^2-\hbox{(Cantor set)}$}{a disk minus a Cantor set}}
\label{SecCantorSet}

\noindent
In this section we suppose that $X$ has a closed subset
homeomorphic to $D^2-C$, 
where $D^2$ is the $2$-disk and $C$ is a Cantor
set in the interior of $D^2$.  
%
%
We will 
construct a subgroup $\Gtilde$ of $\Map(X)$, supported on~$D^2-C$,
which is not virtually solvable and contains no nonabelian free
group.
The idea is to interpret
the generators 
of Grigorchuk's group~$G$ as
automorphisms 
of~$X$, 
and define $\Gtilde$ as the subgroup of $\Map(X)$ generated by
the mapping classes represented by these automorphisms.

We will work with the following  description of $D^2-C$ as an identification
space, obtained by gluing pairs of pants together in the pattern of the
tree~$T$.
Fix
a pair of pants~$P$, meaning a sphere
minus the interiors of three pairwise disjoint closed disks.
We suppose that the components of $\partial P$ 
are called (in some order)
the waist and the left and right cuffs of~$P$. 
We fix a self-homeomorphism $\sigma$ (for \emph{s}wap)
of $P$, which fixes the waist pointwise and exchanges the cuffs, 
such that $\sigma^2$ is the identity on each cuff.
We choose a homeomorphism from the waist to~$S^1$.  We choose
homeomorphisms from the cuffs to~$S^1$
which are compatible with each other,
in the sense that 
they are exchanged by~$\sigma$.

We define $W$ (for binary \emph{W}\kern-.15em ords) as the set of vertices of~$T$,
equipped with the discrete topology.  We equip $P\times W$ with the product topology.
For each $w\in W$ we abbreviate $P\times\{w\}$ as~$P_w$.
We transfer  our labeling (as waist or left/right cuff)
of the components of~$\partial P$
to those of~$\partial P_w$.  

We define an equivalence relation~$\sim$ on $P\times W$ by 
gluing the left resp.\ right cuff of each $P_w$ to the waist
of~$P_{w0}$ resp.\ $P_{w1}$.  Formally, we 
declare that
for each $w\in W$ we have $(l,w)\sim(l',w0)$ and $(r,w)\sim(r',w1)$,
where $l$ resp.\ $r$ lies in the left resp.\ right cuff of~$P$,
and $l'$ resp.\ $r'$ is the corresponding point on the waist of $P$.
(We fixed homeomorphisms from the waist and  cuffs to~$S^1$.  
Combining them
gives
homeomorphisms from the
cuffs to the waist, which we take as the definitions of $l\mapsto l'$
and $r\mapsto r'$.)
We write $X_\emptyset$ for $(P\times W)/{\sim}$, equipped with the
quotient space topology.  It is standard that $X_\emptyset\iso D^2-C$, so
we regard $X_\emptyset$ as a subsurface of~$X$.  

The curious notation $X_\emptyset$ is a special case of the following more general
notation $X_w$, useful for
referring to subsurfaces.
If $w\in W$ then we define $W_{\!w}$ as the set of binary words
having $w$ as a prefix.  Equivalently, it consists of~$w$ and
the vertices of~$T$ 
that lie below~$w$.  We define $X_w$ as $(P\times W_{\!w})/{\sim}$.
If is easy to see that if $x\in W_{\!w}$ then $P_x\to P\times W_{\!w}\to X_w\to X_\emptyset$
is injective.  So we will regard all $P_x$ and $X_w$ as subsets 
of~$X_\emptyset\sset X$.  In particular,
$X_w$ is a subsurface of~$X$
bounded by the waist of~$P_w$.

Now suppose $g\in\{a,b,c,d\}\sset G$.  We will define an
automorphism $\ghat$ 
of $X$ that is supported on~$X_\emptyset$.
We define it in terms of an automorphism of $P\times W$ that respects~$\sim$.
Namely, if $p\in P$ and $w\in W$, then
\begin{equation}
    \label{EqDefnOfghat}
    \ghat(p,w)=
    \begin{cases}
        (\sigma(p),w)&\hbox{if $w$ is a swap vertex of~$g$}
        \\
        (p,g(w))&\hbox{otherwise.}
    \end{cases}
\end{equation}
After checking that this
respects~$\sim$, we may regard $\ghat$ as an automorphism of~$X$.

\begin{figure}
\includegraphics[width=\picwidth,trim=130 45 60 120,clip=true]{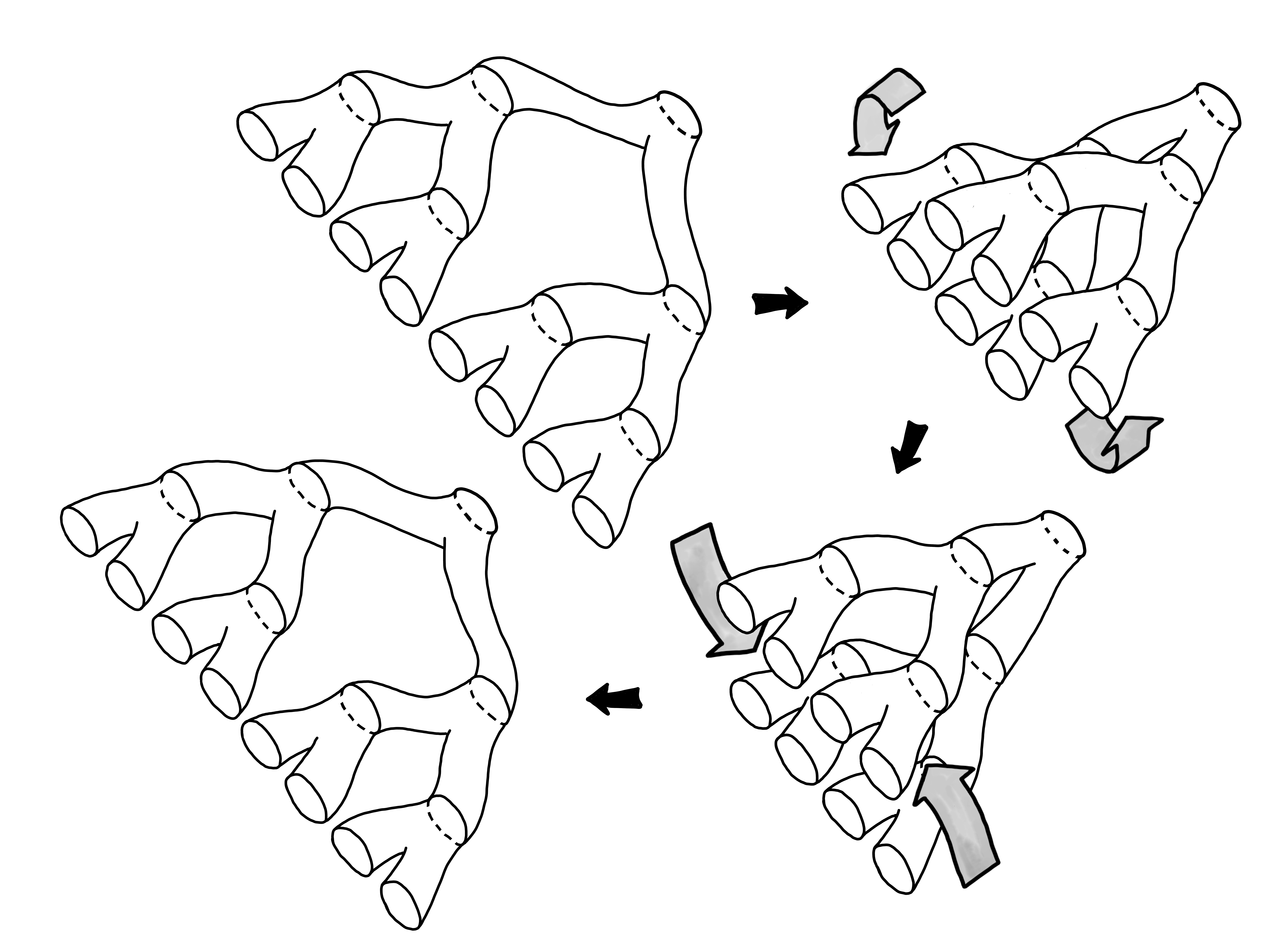}
    \caption{Our construction of~$\ghat$.  Here $g$ is one of 
    Grigorchuk's generators $a,b,c,d$, and
    the surface shown represents $X_w\sset X$, 
    where $w\in W$ is a swap vertex of~$g$.  We define $\ghat$ (on $X_w$) as
    the end result of the pictured isotopy in~$\R^3$.  See the text for details.}
    \label{FigTheTrick}
\end{figure}

A visual version of this construction, entirely optional, appears
in Figure~\ref{FigTheTrick}, which shows $X_w$ for $w$ a swap vertex of~$g$.
The same construction should be carried out simultaneously for every
swap vertex.
The figure shows four frames of an isotopy of $X_w$ inside~$\R^3$,
whose time~$1$ map we take as the definition of~$\ghat$.  Although
$X_w$ is the union of infinitely many pairs of pants, we have only
drawn enough of them to illustrate the idea.  You should imagine 
attaching another $8$ pairs of pants on the left, then attaching
$16$ more pairs, then another~$32$, and so on,
limiting to the Cantor set which is the end space of $X_w$.
The top right pair of pants  is~$P_w$, 
and we imagine it is
made of rubber, deforming as needed 
to accomodate the motion
of  the surfaces $X_{w0}$
and $X_{w1}$ attached to its cuffs, which are made of steel.
The isotopy moves $X_{w0}$ and~$X_{w1}$ rigidly
(ie, by translations) until they trade places.
Strictly speaking,
$X_{w0}$ and $X_{w1}$ should appear identical, so that the time~$1$
map of the isotopy is a self-map of the pictured surface.
But we have drawn them slightly differently,
so the reader can follow their motions more easily.  It is easy
to see that the restriction of $\ghat^2$ to~$P_w$ is
(isotopic to) the product of
the left-handed Dehn twist around the waist and 
the right-handed Dehn twists around the cuffs.

Now we return to the formal development.
We define $\atilde,\btilde,\ctilde,\dtilde\in\Map(X)$
as the mapping classes of $\ahat,\bhat,\chat,\dhat$, and
\begin{align*}
    \Ghat{}=\gend{\ahat,\bhat,\chat,\dhat}\sset\Homeo(X)
    \qquad
    \Gtilde{}=\gend{\atilde,\btilde,\ctilde,\dtilde}\sset\Map(X)
\end{align*}
By construction, $\ahat,\bhat,\chat,\dhat$
permute the $P_w\sset X$ in the
same way that $a,b,c,d\in G$
permute the vertices~$w$ of~$T$.  
It follows that $\ahat,\bhat,\chat,\dhat$
permute the ends of $X_\emptyset$ the same way that $a,b,c,d$
permute the ends of $T$.  The same holds with
$\atilde,\ldots,\dtilde$ in place of $\ahat,\dots,\dhat$,
because the action of $\Homeo(X)$ (or $\Homeo^+(X)$ if $X$ is orientable)
on the ends of~$X$ factors
through $\Map(X)$. 
Also, $\Gtilde$ fixes every end of $X$
not coming from~$X_\emptyset$.
We have shown that the image of~$\Gtilde$, under the
action of $\Map(X)$ on the ends of~$X$, is isomorphic to~$G$.

\begin{lemma}
    \label{LemCantorAbelianKernel}
    The surjection $\Gtilde\to G$ has abelian kernel.
\end{lemma}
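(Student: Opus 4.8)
The plan is to describe the kernel concretely by passing back to $\Ghat$. First I would observe that, since $\ahat,\dots,\dhat$ permute the subsurfaces $P_w$ exactly as $a,\dots,d$ permute the vertices of $T$, there is a homomorphism $\Ghat\to G$ recording this permutation action of $\Ghat$ on $\set{P_w}{w\in W}$. It coincides with the composite $\Ghat\to\Gtilde\to G$, because both describe how the relevant word in $a,b,c,d$ acts on the ends of $X_\emptyset$, and those ends are identified $G$-equivariantly with the ends of~$T$. Since $\Ghat\to\Gtilde$ is onto, $\Ker(\Ghat\to G)$ maps onto $\Ker(\Gtilde\to G)$, so it suffices to prove that $\Ker(\Ghat\to G)$ is abelian.

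Next I would compute a general element of $\Ker(\Ghat\to G)$ on the nose. Write it as $\ghat=\ghat_1\cdots\ghat_k$, so that $g_1\cdots g_k=1$ in~$G$, and apply it to a point $(p,w)\in P\times W$. Following the word coordinate along the trajectory $w=w_k\mapsto w_{k-1}=g_k(w_k)\mapsto\cdots\mapsto w_0=(g_1\cdots g_k)(w)=w$, formula \eqref{EqDefnOfghat} shows that the $i$th letter either fixes the word coordinate and applies $\sigma$ to the $P$-coordinate, when $w_i$ is a swap vertex of $g_i$, or else fixes the $P$-coordinate and replaces $w_i$ by $g_i(w_i)$. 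Hence $\ghat(p,w)=(\sigma^{s(w)}p,w)$, where $s(w)$ counts the indices~$i$ at which $w_i$ is a swap vertex of $g_i$; in particular $\ghat$ fixes every $P_w$ and restricts there, under the fixed identification $P_w\iso P$, to $\sigma^{s(w)}$. (One checks $s(w)$ is even, since $\ghat$ must preserve both cuffs of $P_w$, consistent with the remark after Figure~\ref{FigTheTrick} that $\sigma^2|_{P_w}$ is a multitwist; but evenness is not needed below.)

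The conclusion is then immediate. If $\ghat'\in\Ker(\Ghat\to G)$ satisfies $\ghat'(p,w)=(\sigma^{s'(w)}p,w)$, then both $\ghat\ghat'$ and $\ghat'\ghat$ carry $(p,w)$ to $(\sigma^{s(w)+s'(w)}p,w)$, so $\ghat$ and $\ghat'$ commute already as self-homeomorphisms of $P\times W$, hence of $X$; a fortiori their mapping classes commute. Thus $\Ker(\Ghat\to G)$ is abelian, and so is its image $\Ker(\Gtilde\to G)$. (If one prefers to argue at the level of mapping classes: each kernel element is, by the above, a product over the vertices $w$ of powers of the multitwist $\sigma^2|_{P_w}$; these are supported on the gluing curves of the pants decomposition, which are pairwise disjoint, so they commute.)

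The step needing the most care is the middle one: checking that the $P$-coordinate of $(p,w)$ is only ever altered by $\sigma$, and exactly at the swap vertices met along $w_k\mapsto\cdots\mapsto w_0$, together with verifying that the two descriptions of $\Ghat\to G$, via the permutations of the $P_w$ and via the action on ends that defines $\Gtilde\to G$, really agree. The rest is bookkeeping.
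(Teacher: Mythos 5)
Your argument is correct, but it takes a genuinely different route from the paper. The paper stays inside $\Map(X)$: it shows that any $\alpha\in\Ghat$ acting trivially on the ends preserves each $P_w$ and each waist, then uses the fact that the mapping class group of a pair of pants is generated by boundary Dehn twists to isotope $\alpha$ into a union $U$ of disjoint annular neighborhoods of the waists, and concludes because $\Map(U)$ is a direct product of copies of~$\Z$. You instead compute kernel elements of $\Ghat\to G$ on the nose from \eqref{EqDefnOfghat}: they all have the form $(p,w)\mapsto(\sigma^{m(w)}p,w)$, hence commute already in $\Homeo(X)$, and the kernel of $\Gtilde\to G$ is the image of this abelian group (your reduction $\pi(\Ker(\phi\circ\pi))=\Ker\phi$ for surjective $\pi$, and the agreement of the vertex-permutation map with the end-action map, are both fine). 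Your route avoids all surface topology (no isotopies, no pair-of-pants fact) and even proves the stronger statement that $\Ker(\Ghat\to G)$ is abelian in $\Homeo(X)$; the paper's route is more robust geometrically and is the one reused almost verbatim in the infinite-genus case. One small point to make explicit: since $\ahat,\bhat,\chat,\dhat$ are not involutions in $\Homeo(X)$ (only $a,b,c,d$ are involutions in $G$), a general element of $\Ghat$ is a word in the generators \emph{and their inverses}; this is harmless because $\ghat^{-1}(p,w)=(\sigma^{-1}(p),w)$ at swap vertices and $(p,g(w))$ otherwise, so your bookkeeping goes through with a signed count $m(w)\in\Z$, and all powers of $\sigma$ still commute.
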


\begin{proof}
    Let $U$ be the union of a family of mutually disjoint annular neighborhoods
    of the waists of the $P_w$, with $w$ varying over~$W$. 
    We claim that every $\alpha\in\Ghat$,
    that acts trivially on the end space of~$X$, is
    supported on~$U$ (up to isotopy).  
    Given this, it follows that the 
    kernel of $\Gtilde\to G$ lies in the image of $\Map(U)\to\Map(X)$.
    Then the lemma follows from the fact
    that $\Map(U)$ is a direct product
    of copies of~$\Z$.

    Now we prove the claim.
    Because $\alpha$ permutes the $P_w$, and acts trivially on the ends
    of~$X_\emptyset$, it sends each $P_w$ to itself.
    Considering their intersections
    shows that $\alpha$
    preserves the waist of every~$P_w$.  
    After an isotopy supported in~$X_\emptyset$
    and preserving every~$P_w$, we may
    suppose $\alpha$ fixes each waist pointwise. 
    The mapping class group
    of a pair of pants is generated by the Dehn twists around its
    boundary components.  
    Applying this separately to each~$P_w$ shows that $\alpha$
    is isotopic to an automorphism of~$X$ that is supported in~$U$.
\end{proof}

\begin{lemma}
    \label{LemSolvableKernel}
    Suppose $f:\Gtilde\to G$ is a surjection of groups with solvable kernel.
    Then
    \begin{enumerate}
        \item
            \label{ItemVirtuallySolvable}
            $\Gtilde$ is virtually solvable if and only if~$G$ is.
        \item
            \label{ItemContainsNonabelianFree}
            $\Gtilde$ contains a nonabelian free group if and only if~$G$ does.
    \end{enumerate}
\end{lemma}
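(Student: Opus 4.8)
The plan is to prove both parts by a short group-theoretic argument using the hypothesis that $\Ker f$ is solvable, together with Lemma~\ref{LemGrigorchuk}.

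For part~\eqref{ItemVirtuallySolvable}: if $\Gtilde$ is virtually solvable, then its quotient $G\iso\Gtilde/\Ker f$ is virtually solvable, because the image of a finite-index solvable subgroup under a surjection is again finite-index and solvable. Conversely, suppose $G$ is virtually solvable; by Lemma~\ref{LemGrigorchuk} this is false, so this direction is vacuous. But to keep the statement honest as a biconditional I would instead argue the genuinely useful direction directly: if $G$ is virtually solvable, let $H\subgp G$ be solvable of finite index; then $f^{-1}(H)$ has finite index in~$\Gtilde$, and sits in an extension $1\to\Ker f\to f^{-1}(H)\to H\to 1$ with both ends solvable, hence $f^{-1}(H)$ is solvable (an extension of a solvable group by a solvable group is solvable). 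So $\Gtilde$ is virtually solvable. Thus both implications hold.

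For part~\eqref{ItemContainsNonabelianFree}: if $\Gtilde$ contains a nonabelian free subgroup $F$, consider $f|_F\colon F\to G$. Its kernel is $F\cap\Ker f$, a subgroup of a solvable group, hence solvable; but a solvable subgroup of a free group is cyclic (free groups have no noncyclic solvable subgroups, since any subgroup of a free group is free and a nonabelian free group is not solvable). A nonabelian free group $F$ cannot have $F/(\text{cyclic normal subgroup})$ without the cyclic subgroup being trivial --- more simply, a nontrivial normal subgroup of a nonabelian free group has infinite index and is itself nonabelian free unless... here the cleanest route: $\Ker(f|_F)$ is solvable and normal in~$F$; the only solvable normal subgroup of a free group is trivial (a nontrivial normal subgroup of a nonabelian free group is again nonabelian free, hence not solvable); so $f|_F$ is injective and $f(F)\iso F$ embeds in~$G$. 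Conversely, if $G$ contains a nonabelian free subgroup~$F$, pick two elements $x,y\in\Gtilde$ mapping to free generators of~$F$; then $x,y$ generate a subgroup surjecting onto~$F$, so in particular they satisfy no nontrivial relation (any relation would descend to~$F$), whence $\gend{x,y}$ is free of rank~$2$. So both implications hold. (In our application $G$ is torsion, so contains no nonabelian free group, and this direction is again vacuous; but the biconditional costs nothing.)

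The only real content is the standard fact that a solvable normal subgroup of a free group is trivial, equivalently that subgroups of free groups are free and nonabelian free groups are not solvable; everything else is the closure of solvable groups under extensions, subgroups, and quotients. I expect no genuine obstacle here --- the lemma is a packaging step whose purpose is to combine Lemma~\ref{LemCantorAbelianKernel} (abelian, hence solvable, kernel) with Lemma~\ref{LemGrigorchuk} to conclude in Section~\ref{SecMainTheorem} that $\Gtilde$ is neither virtually solvable nor contains a nonabelian free group. The one point requiring a little care is making sure the ``conversely'' directions are phrased so as not to secretly invoke the falsity of their hypotheses, so that the statement stands on its own as a clean biconditional applicable to any surjection with solvable kernel.
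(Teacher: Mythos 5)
Your proof is correct and follows essentially the same route as the paper: part (i) by the standard closure of (virtual) solvability under quotients and extensions (the paper just says ``Obvious''), and part (ii) by lifting free generators (a relation would descend) in one direction and, in the other, noting that the intersection of a nonabelian free subgroup with the solvable kernel is a solvable normal subgroup of a nonabelian free group, hence trivial. No gaps; your extra care about the fact that nontrivial normal subgroups of nonabelian free groups are not solvable is exactly the fact the paper also invokes, just stated more tersely there.
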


\begin{proof}
    \eqref{ItemVirtuallySolvable}
    Obvious.
    \eqref{ItemContainsNonabelianFree}
    First suppose $G$ contains a subgroup freely generated by two elements $x,y$,
     and choose any lifts $\xtilde,\ytilde$ of them in~$\Gtilde$.
     These lifts satisfy no nontrivial relations,
    because if they did then $x,y$  would also.
    So $\xtilde,\ytilde$ generate a nonabelian free subgroup of~$\Gtilde$.
    Now suppose $\Gtilde$ contains a nonabelian free group~$\Ftilde$.  Being nonabelian
    and free, $\Ftilde$ lacks normal solvable subgroups, so it meets $\Ker(f)$
    trivially.  
    Therefore $f(\Ftilde)\iso\Ftilde$ is a  nonabelian free subgroup of~$G$.
%
\end{proof}

\begin{lemma}
    \label{LemCantorTitsAlternativeFails}
    With $X$ as above, $\Map(X)$ does not satisfy the Tits alternative.
\end{lemma}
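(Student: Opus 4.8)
The plan is to combine the three lemmas already established in this section. By construction we have a surjection $\Gtilde\to G$ onto Grigorchuk's group, and Lemma~\ref{LemCantorAbelianKernel} tells us its kernel is abelian, in particular solvable. So Lemma~\ref{LemSolvableKernel} applies with $f$ this surjection. By Lemma~\ref{LemGrigorchuk}, $G$ is not virtually solvable and contains no nonabelian free group; transporting these two facts across $f$ via parts~\eqref{ItemVirtuallySolvable} and~\eqref{ItemContainsNonabelianFree} of Lemma~\ref{LemSolvableKernel}, we conclude that $\Gtilde$ is not virtually solvable and contains no nonabelian free group either.

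It then remains only to note that $\Gtilde$ is finitely generated — indeed it is generated by the four elements $\atilde,\btilde,\ctilde,\dtilde$ by definition. A finitely generated subgroup that is neither virtually solvable nor contains a nonabelian free group is exactly a witness to the failure of the Tits alternative, so $\Map(X)$ does not satisfy it.

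There is essentially no obstacle here: all the work has been front-loaded into the construction of $\ghat$ and into the three preceding lemmas. The only thing to be careful about is making sure the hypotheses of Lemma~\ref{LemSolvableKernel} are literally met, i.e.\ that ``abelian kernel'' from Lemma~\ref{LemCantorAbelianKernel} qualifies as ``solvable kernel'', which it trivially does. So the proof is a two-line bookkeeping argument assembling Lemmas~\ref{LemGrigorchuk}, \ref{LemCantorAbelianKernel}, and~\ref{LemSolvableKernel}.

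\begin{proof}
    By construction $\Gtilde$ is generated by $\atilde,\btilde,\ctilde,\dtilde$, hence is finitely generated, and we have a surjection $\Gtilde\to G$. Its kernel is abelian by Lemma~\ref{LemCantorAbelianKernel}, hence solvable, so Lemma~\ref{LemSolvableKernel} applies. Since $G$ is not virtually solvable and contains no nonabelian free group (Lemma~\ref{LemGrigorchuk}), parts~\eqref{ItemVirtuallySolvable} and~\eqref{ItemContainsNonabelianFree} of Lemma~\ref{LemSolvableKernel} give that $\Gtilde$ is not virtually solvable and contains no nonabelian free group. Thus $\Gtilde$ is a finitely generated subgroup of $\Map(X)$ that witnesses the failure of the Tits alternative.
\end{proof}
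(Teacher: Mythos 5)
Your proof is correct and is essentially the paper's own argument: both assemble Lemma~\ref{LemCantorAbelianKernel} (abelian, hence solvable, kernel), Lemma~\ref{LemSolvableKernel} to transfer the two properties from $G$ to $\Gtilde$, and Lemma~\ref{LemGrigorchuk} for the properties of $G$ itself. Your explicit remark that $\Gtilde$ is finitely generated (by the four generators) is a small but welcome addition that the paper leaves implicit.
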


\begin{proof}
    It is enough to show that $\Gtilde\sset\Map(X)$ is not virtually solvable
    and contains no nonabelian free group.  By
    Lemmas \ref{LemCantorAbelianKernel}
    and \ref{LemSolvableKernel} it is enough to 
    prove this with $G$ in place of~$\Gtilde$. We did this
    in Lemma~\ref{LemGrigorchuk}.
\end{proof}

\section{If \texorpdfstring{$X$}{X} has infinitely many handles or punctures}
\label{SecPuncturesOrHandles}

\noindent
In this section we consider a surface $X$ with infinite
genus, or infinitely many punctures.
Again we will build
a subgroup~$\Gtilde$ of $\Map(X)$ that is not virtually solvable and
contains no nonabelian free group.  
The method is a variation on the previous section.
We will focus on the
infinite-genus case, and then modify the
argument for the case of infinitely many punctures.

Suppose $X$ is a surface with infinite genus.  Take
$X_1$ to be a compact subsurface with genus~$2$ and one boundary component.
Then take $X_2$ to be a compact subsurface of $X-X_1$ with genus~$4$
and one boundary component.  Then  take
$X_3$ to be a compact subsurface of $X-(X_1\cup X_2)$ with genus~$8$
and one boundary component.  Continuing in this
fashion, we obtain an infinite
sequence $X_n$ of mutually disjoint compact subsurfaces
of~$X$, where
$X_n$ has genus~$2^n$ and one boundary component.  
As suggested by Figure~\ref{FigPantsAndFeet}, 
we express each $X_n$ as
the union of $2^n-1$ pairs of pants, arranged like 
a binary tree, and $2^n$ handles.

\begin{figure}
\includegraphics[width=\picwidth,trim=330 420 200 420,clip=true]{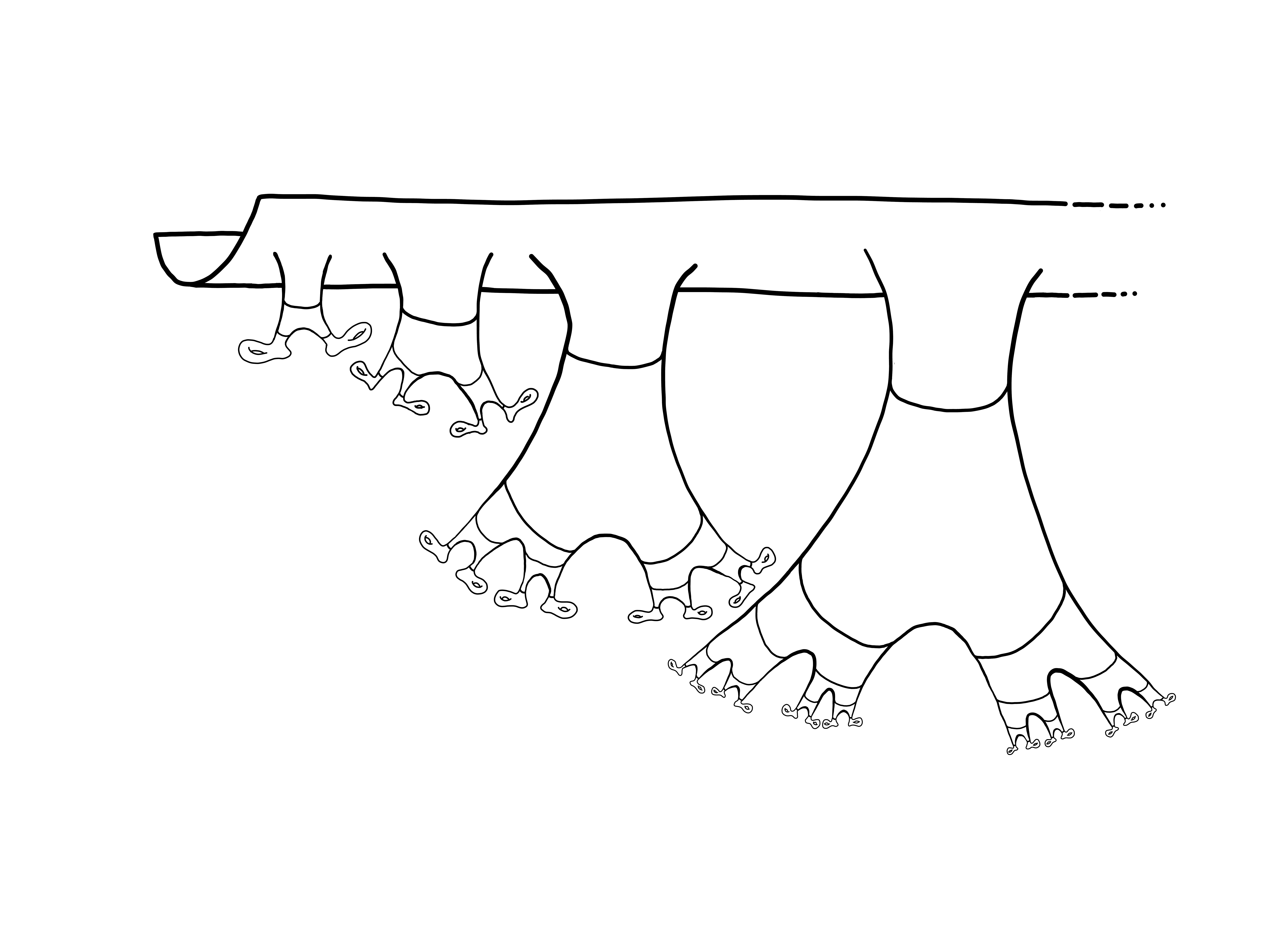}
    \caption{The subsurfaces $X_1,X_2,X_3,X_4,\dots$ of~$X$.}
    \label{FigPantsAndFeet}
\end{figure}

To express this precisely, recall 
$W$, $P$ and $P_w$ (with $w\in W$) from
Section~\ref{SecCantorSet}.
For each integer~$n$ we write $W_n\sset W$ for the set of
binary words of length~$n$, $W_{\!<n}$ for those of length${}<n$,
and $W_{\!\leq n}$ for those of length${}\leq n$.

We also fix a compact surface $S$ (for \emph{S}hoe) 
with genus~$1$ 
and one boundary component.  We will call $\partial S$ the rim of~$S$;
a shoemaker would say collar, but that already has
a meaning in topology.
We fix a homeomorphism from the rim to the circle~$S^1$.

For each~$n\geq1$, we identify $X_n$ with the quotient of 
$(P\times W_{\!<n})\cup(S\times W_n)$ by the following equivalence
relation~$\sim$, similar to the one in
Section~\ref{SecCantorSet}.  We abbreviate $S\times\{w\}$
as $S_w$, for each $w\in W_n$.  We refer to the $S_w$ as the \defn{shoes}.  First,
for each $w\in W_{\!<n-1}$, we glue the cuffs of 
$P_w$ to the waists of $P_{w0}$ and $P_{w1}$.  Formally,
we
declare that $(l,w)\sim(l',w0)$
and $(r,w)\sim(r',w1)$, where $l$ resp.\ $r$ lies in the left
resp.\ right cuff of~$P$, and $l'$ resp.\ $r'$ is the corresponding
point on the waist of~$P$.  Second, for each $w\in W_{n-1}$
we glue the cuffs of $P_w$ to the rims of $S_{w0}$ and $S_{w1}$.
Formally, we declare that
$(l,w)\sim(l',w0)$ and $(r,w)\sim(r',w1)$, where
$l,r$ are as before, but now $l',r'$ are the corresponding
points on the rim of~$S$.

For each $n\geq1$, and each $w\in W_{\!<n}$, we have introduced a
subsurface $P_w$ of~$X_n$.  Since all the $X_n$ lie in~$X$,
we have introduced infinitely many subsurfaces of~$X$, all called~$P_w$.
To distinguish them when necessary, we write $P_{n,w}$ for
the one that lies in~$X_n$.  The same issue does not arise for
the shoes $S_w$: if $w\in W$ has length~$n$, 
then only $X_n$ contains $S_w$.

For $g\in\{a,b,c,d\}\sset G$, we now define an automorphism~$\ghat$
of~$X$.  It is the identity outside the $X_n$, and for each fixed $n$ we 
define $\ghat$ on~$X_n$ as follows.  The idea is to act on $X_n$
``in the same way that $g$ acts on the top $n+1$ levels of the tree~$T$''. 
Formally, we define $\ghat$ as an automorphism of 
$(P\times W_{\!<n})\cup(S\times W_n)$ that respects~$\sim$.
For each $w\in W_{\!<n}$ we define $\ghat$ on $P_w$ 
by \eqref{EqDefnOfghat}.
And for $w\in W_n$ we define
$\ghat$ on $S_w$ by $\ghat(p,w)=(p,g(w))$.  After checking that
this respects~$\sim$, we may regard $\ghat$ as defined on~$X_n$.
This completes the construction of $\ghat\in\Homeo(X)$.
If $X$ is orientable then clearly $\ghat$ lies in $\Homeo^+(X)$.

We define $\atilde,\btilde,\ctilde,\dtilde\in\Map(X)$
as the mapping classes of $\ahat,\bhat,\chat,\dhat$, and
\begin{align*}
    \Ghat{}=\gend{\ahat,\bhat,\chat,\dhat}\sset\Homeo(X)
    \qquad
    \Gtilde{}=\gend{\atilde,\btilde,\ctilde,\dtilde}\sset\Map(X)
\end{align*}

\begin{lemma}
    \label{LemGenusSurjectionToG}
    The image of $\Gtilde$ in $\Aut(H_1(X))$ is isomorphic to~$G$.
\end{lemma}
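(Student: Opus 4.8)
The plan is to mimic the strategy of Section~\ref{SecCantorSet}: produce a surjection $\Gtilde\to G$ with an abelian (in fact central, solvable) kernel, and then invoke Lemmas~\ref{LemSolvableKernel} and~\ref{LemGrigorchuk}. The target of the surjection here is not the action on an end space — $X_\emptyset$ has been replaced by a \emph{disjoint union} of compact pieces $X_n$, so there are no relevant ends — but rather the action on homology $H_1(X)$. So the first task is to exhibit, for each~$n$, a homomorphism $\Ghat\to\Aut(H_1(X_n))$ whose image is a copy of~$G$, arising from the way $\ghat$ permutes the $2^n$ shoes $S_w$ ($w\in W_n$).

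First I would analyze $H_1(X_n)$. Since $X_n$ is built from a binary tree of pairs of pants capped off by $2^n$ genus-one shoes, $H_1(X_n;\Z)$ decomposes (rel the single boundary curve, which is null-homologous in $X_n$ once we include the genus contributions) as a direct sum indexed by the shoes: each $S_w$ contributes a rank-two symplectic summand $\langle \alpha_w,\beta_w\rangle$ coming from its genus-one handle, while the pants $P_w$ contribute nothing new to $H_1$ beyond relations expressing waist curves as sums of cuff curves, which are themselves sums over the shoes below them. Thus $H_1(X_n)\cong\bigoplus_{w\in W_n}(\Z\alpha_w\oplus\Z\beta_w)$. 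Because $\ghat$ was defined on $S\times W_n$ by $\ghat(p,w)=(p,g(w))$ — a \emph{bodily permutation} of the shoes, not a twist — the induced map on $H_1(X_n)$ simply permutes these rank-two summands exactly as $g$ permutes $W_n$. (One must check $\ghat$ preserves orientations/framings of the handles so that $\alpha_w\mapsto\alpha_{g(w)}$ rather than up to sign; this follows from $\ghat$ acting by a genuine homeomorphism carrying $S_w$ to $S_{g(w)}$ compatibly with the fixed identifications $S_w=S\times\{w\}$.)

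Next I would assemble these over all~$n$. The surjection $\Gtilde\to G$ is defined by: $\Gtilde$ acts on $H_1(X)$, hence on each invariant summand, and in particular on $\bigoplus_{w\in W_n}H_1(S_w)$; the resulting permutation action on the index set $W_n$ is the action of $g$ on the $n$th level of~$T$. To recover all of~$G$ from a single level, note — as in the previous section — that $a,b,c,d$ act \emph{faithfully} on the levels of~$T$ collectively, but in fact even the action on a single deep level $W_n$ is faithful for $n$ large enough relative to the support of a given group element; more cleanly, the honest statement is that the homomorphism $\Gtilde\to\prod_n\mathrm{Sym}(W_n)$, $g\mapsto(g|_{W_n})_n$, has image the diagonal copy of~$G$ (since $a,b,c,d$ act on the full tree~$T=\varprojlim$ of these levels). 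So the composite $\Gtilde\to\Aut(H_1(X))\to\prod_n\mathrm{Sym}(W_n)$ has image $G$, proving the lemma — \emph{provided} one argues the image in $\Aut(H_1(X))$ itself (not just in the permutation quotient) is already $G$. That requires showing the map $\Gtilde\to\Aut(H_1(X))$ kills nothing beyond what the permutation action kills; equivalently that a mapping class in $\Gtilde$ acting trivially on every $W_n$ acts trivially on $H_1(X)$. This is where the handle structure matters: such a class fixes each $S_w$ setwise and each $P_{n,w}$ setwise, so by the pair-of-pants and genus-one computations it acts on $H_1(X)$ as a product of powers of Dehn twists about the waist/cuff curves and about curves in the shoes — but a Dehn twist about a \emph{separating} simple closed curve (every waist and every rim is separating in~$X$, bounding a compact piece on one side) acts trivially on $H_1$, and a nonseparating twist inside a handle can be ruled out because $\ghat^2|_{P_w}$ was arranged to be a product of boundary twists (all separating) and no interior-handle twists were ever introduced. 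Hence the kernel of $\Gtilde\to\Aut(H_1(X))$ equals the kernel of the permutation action, and the image is exactly~$G$.

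\textbf{Main obstacle.} The crux is the last point: controlling the image of $\Gtilde\to\Aut(H_1(X))$ precisely, i.e.\ verifying that no element of $\Gtilde$ acts on homology in a way not already visible in the shoe-permutation. Concretely, one must rule out that commutators or products of the $\ghat$'s introduce Dehn twists about \emph{nonseparating} curves (which would act nontrivially on $H_1$). The safeguard is that every curve appearing in the construction — waists, cuffs, rims — is separating in~$X$ (each bounds a compact subsurface $X_w$ or a shoe on one side), so all such twists are homologically trivial; the genus of each $X_n$ is used up entirely inside the shoes and is never ``twisted through.'' Making this rigorous is essentially the homological analogue of Lemma~\ref{LemCantorAbelianKernel}, and I would expect the eventual proof to prove a sharper statement — that $\Ker(\Gtilde\to G)$ is abelian, generated by separating Dehn twists — and then deduce Lemma~\ref{LemGenusSurjectionToG} as a corollary, rather than proving the homological statement in isolation.
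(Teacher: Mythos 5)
Your proposal is correct and follows the paper's own route: identify $H_1\bigl(\bigcup_n X_n\bigr)$ with $\bigoplus_{w\in W-\{\emptyset\}}H_1(S_w)$, note that $\ahat,\bhat,\chat,\dhat$ permute these summands exactly as $a,b,c,d$ permute the nonroot vertices of~$T$ (a faithful action), and then check that nothing beyond this permutation survives in $\Aut(H_1(X))$. For that last step the paper argues more directly than your Dehn-twist discussion: since each generator acts on the shoes by $(p,w)\mapsto(p,g(w))$, any element of $\Ghat$ carrying some $S_w$ to itself restricts to the identity on it, so an element preserving every summand acts trivially on all of $H_1(X)$ --- no appeal to $\ghat^2|_{P_w}$ or to twist generators of the pieces' mapping class groups is needed (and, incidentally, the paper proves this lemma first and deduces the abelian-kernel statement from it, not the reverse).
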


\begin{proof}
    Because $\Ghat$ fixes $X-\bigcup_n X_n$
    pointwise, it acts trivially on the homology classes 
    supported there.  So it is enough to work out the action
    of~$\Gtilde$ on 
    \begin{equation*}
        H_1\bigl(\bigcup_{n\geq1} X_n\bigr)
        =
        \bigoplus_{n\geq1} H_1(X_n)
        =
        \bigoplus_{n\geq1}\, \bigoplus_{w\in W_n} H_1(S_w)
        =
        \bigoplus_{w\in W-\{\emptyset\}}\kern-.8em H_1(S_w)
    \end{equation*}
    By construction, $\Ghat$ permutes the shoes, hence
    the summands on the right.  In particular, an element
    of $\Ghat$ that preserves a summand $H_1(S_w)$ must carry
    the corresponding shoe $S_w$ to itself.  
    Furthermore, the definition of the generators of~$\Ghat$ on the shoes
    implies: an element of $\Ghat$ that preserves some~$S_w$ must
    act trivially on it.  Therefore: any element of $\Ghat$,
    that preserves every summand $H_1(S_w)$, must act trivially on
    every $S_w$, hence trivially on~$H_1(X)$.

    Another way to say this is that the image of $\Ghat$ in $\Aut(H_1(X))$
    is the same as the image of $\Ghat$ in the group of permutations of
    the shoes.
    By construction, $\ahat,\bhat,\chat,\dhat$
    permute the shoes in the
    same way that $a,b,c,d\in G$ permute the words $w\in W-\{\emptyset\}$.
    So the image of $\Ghat$ in $\Aut(H_1(X))$ is a copy of~$G$.
    The action on $H_1(X)$ factors through $\Map(X)$, so the
    same is true with $\Gtilde$ in place of $\Ghat$.
\end{proof}

\begin{lemma}
    \label{LemGenusAbelianKernel}
    The surjection $\Gtilde\to G$ has abelian kernel.
\end{lemma}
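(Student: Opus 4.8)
The plan is to imitate the proof of Lemma~\ref{LemCantorAbelianKernel}, with $H_1(X)$ playing the role of the end space and with the pants $P_{n,w}$ and shoes $S_w$ of the $X_n$ playing the role of the~$P_w$. Let $U$ be the union of a family of mutually disjoint annular neighborhoods of the following circles: the waist of each $P_{n,w}$, and the rim of each shoe~$S_w$. (Equivalently, $U$ is a neighborhood of each $\partial X_n$ together with each circle along which two of the pieces of $X_n$ are glued.) I claim that every $\alpha\in\Ghat$ acting trivially on $H_1(X)$ is, up to isotopy, supported on~$U$. Granting this, $\Ker(\Gtilde\to G)$ lies in the image of $\Map(U)\to\Map(X)$, and the lemma follows because $\Map(U)$ is a direct product of copies of~$\Z$, one per annular component, generated by the corresponding Dehn twist.

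To prove the claim, let $\alpha\in\Ghat$ act trivially on $H_1(X)$. By the proof of Lemma~\ref{LemGenusSurjectionToG}, $\alpha$ preserves every summand $H_1(S_w)$, hence carries every shoe to itself, and moreover---because the generators $\ahat,\bhat,\chat,\dhat$ restrict on the shoes to the maps $(p,w)\mapsto(p,g(w))$---$\alpha$ restricts to the identity on every shoe. Next, $\alpha$ carries every $P_{n,w}$ to itself as well: on each $X_n$ the generators permute the pants and the shoes exactly as $a,b,c,d$ permute the vertices of level $<n$ and of level $n$ in~$T$, so the fact that $\alpha$ fixes every shoe forces the associated element of~$G$ to fix every level-$n$ vertex for every~$n$; and a tree automorphism fixing every level-$n$ vertex fixes every vertex of level $\leq n$. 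Considering the intersections of adjacent pieces now shows that $\alpha$ preserves each of the separating circles above; note that $\alpha$ already fixes each rim pointwise (being the identity on the shoes) and each $\partial X_n$ pointwise (by~\eqref{EqDefnOfghat}, since $\sigma$ fixes the waist pointwise).

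After an isotopy supported on small neighborhoods of the remaining separating circles---those along which two pants are glued, which are disjoint from the shoes---and preserving each $P_{n,w}$, we may assume $\alpha$ fixes every separating circle pointwise. Then $\alpha$ is the identity on every shoe, and on every $P_{n,w}$ it is a homeomorphism fixing $\partial P_{n,w}$ pointwise; since the mapping class group of a pair of pants is generated by the Dehn twists about its boundary components, $\alpha$ is isotopic to a homeomorphism supported on the annular neighborhoods of the $\partial P_{n,w}$, all of which lie in~$U$. This proves the claim. The same strategy, with the shoes and $H_1$ replaced by the appropriate features of the punctured construction, will handle the case of infinitely many punctures.

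The step I expect to need the most care is the one showing that $\alpha$ acts trivially---not merely invariantly---on each shoe, and that its restriction to each pair of pants is a product of boundary Dehn twists: this is exactly where the explicit recipe for the generators on the shoes and the rigidity of tree automorphisms are used. Everything else runs parallel to Lemma~\ref{LemCantorAbelianKernel}.
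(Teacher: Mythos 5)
Your proof is correct and follows essentially the same route as the paper: deduce from the $H_1$-action (via Lemma~\ref{LemGenusSurjectionToG}) that a kernel element fixes every shoe and hence preserves every $P_{n,w}$, then run the argument of Lemma~\ref{LemCantorAbelianKernel} with $U$ a union of annular neighborhoods of the waists and rims, using that $\Map(U)$ is a product of copies of~$\Z$. You simply spell out the details that the paper delegates to the earlier lemma, with no substantive difference in approach.
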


\begin{proof}
    We will write $f$ for the surjection $\Ghat\to G$.
    We must show that the image of $\Ker(f)$ in~$\Gtilde$ is abelian.
    So suppose $\alpha\in\Ker(f)$.  The previous proof shows that
    $\alpha$ acts trivially on every shoe~$S_w$.
    Together with the fact that $\alpha$ permutes the $P_{n,w}$, 
    this shows that $\alpha$ preserves  every $P_{n,w}$.

    From here one can follow
    the proof of Lemma~\ref{LemCantorAbelianKernel}.  We take
    $U$ to be the union of disjoint annular neighborhoods of
    the waists of the $P_{n,w}$ (with $n\geq1$ and $w\in W_{\!<n}$) and 
    the rims of the $S_w$ (with $w\in W-\{\emptyset\}$).
    Arguing as for Lemma~\ref{LemCantorAbelianKernel} shows that
     $\alpha$ is isotopic
    to a homeomorphism supported on~$U$.  
    Because $\Map(U)$ is abelian, it follows that $\Ker(f)$ has
    abelian image in~$\Gtilde$.
\end{proof}

\begin{lemma}
    \label{LemGenusTitsAlternativeFails}
    The mapping class group of an infinite-genus surface 
    does not satisfy the Tits Alternative.
\end{lemma}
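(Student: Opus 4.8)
The plan is to repeat for the infinite-genus case the three-step argument that worked in Section~\ref{SecCantorSet}, using the results we have just assembled. First I would invoke Lemma~\ref{LemGenusSurjectionToG} to obtain the surjection $\Gtilde\to G$ realized by the action of $\Gtilde$ on $H_1(X)$. Then Lemma~\ref{LemGenusAbelianKernel} tells us this surjection has abelian kernel, and in particular solvable kernel, so Lemma~\ref{LemSolvableKernel} applies with $f:\Gtilde\to G$. By parts~\eqref{ItemVirtuallySolvable} and~\eqref{ItemContainsNonabelianFree} of that lemma, $\Gtilde$ is virtually solvable if and only if $G$ is, and $\Gtilde$ contains a nonabelian free group if and only if $G$ does. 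Finally Lemma~\ref{LemGrigorchuk} says $G$ is not virtually solvable and contains no nonabelian free group, so the same is true of $\Gtilde$. Since $\Gtilde$ is a finitely generated subgroup of $\Map(X)$ that is neither ``big'' nor ``small'', $\Map(X)$ fails the Tits Alternative.

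The only point requiring genuine care is the case of infinitely many punctures, which the section announces it will handle by a modification of the infinite-genus argument. Here I expect the main obstacle to be choosing the right local surface to play the role of the shoe $S$: instead of a once-punctured torus we want each ``shoe'' to be a surface carrying enough punctures that the generators of $G$, permuting shoes, induce a faithful action on something canonically attached to $X$. I would take the shoes to be, say, twice-punctured disks (or spheres with several punctures) glued to the cuffs of the pants tree exactly as before, and build $\ghat$ by the same recipe: $\sigma$ on pants at swap vertices of~$g$, and the shoe-permutation $(p,w)\mapsto(p,g(w))$ otherwise. One then checks, exactly as in Lemma~\ref{LemGenusSurjectionToG}, that the image of $\Gtilde$ in the permutation group of the punctures (equivalently, in the action on the end space, or on $H_1$ of a suitable cover) is a copy of~$G$; and the proof of Lemma~\ref{LemGenusAbelianKernel} goes through verbatim, with $U$ a union of annular neighborhoods of the waists of the pants and the rims of the shoes, since $\Map(U)$ is again a direct product of copies of~$\Z$.

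In detail, for the infinite-genus case the proof reads: let $\Gtilde$ be as constructed above. By Lemma~\ref{LemGenusSurjectionToG} there is a surjection $\Gtilde\to G$, and by Lemma~\ref{LemGenusAbelianKernel} its kernel is abelian, hence solvable. So Lemma~\ref{LemSolvableKernel} applies, and it suffices to prove that $G$ is not virtually solvable and contains no nonabelian free group, which is Lemma~\ref{LemGrigorchuk}. Therefore $\Gtilde$ is a finitely generated subgroup of $\Map(X)$ that is neither virtually solvable nor contains a nonabelian free group, so $\Map(X)$ does not satisfy the Tits Alternative. The punctured case follows by the same three lemmas once the shoe is replaced by an appropriately punctured surface and the corresponding analogues of Lemmas~\ref{LemGenusSurjectionToG} and~\ref{LemGenusAbelianKernel} are verified by the arguments indicated above.
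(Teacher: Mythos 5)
Your argument for the infinite-genus case is exactly the paper's proof: combine Lemmas~\ref{LemGenusSurjectionToG}, \ref{LemGenusAbelianKernel}, \ref{LemSolvableKernel} and~\ref{LemGrigorchuk} as in the proof of Lemma~\ref{LemCantorTitsAlternativeFails}, so the proposal is correct and takes the same route. Your discussion of the punctured case is extraneous here, since that situation is the subject of the separate Lemma~\ref{LemPuncturesTitsAlternativeFails}, but it does not affect the correctness of the proof of this statement.
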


\begin{proof}
    Mimic the proof of Lemma~\ref{LemCantorTitsAlternativeFails}, 
    using Lemma~\ref{LemGenusAbelianKernel} in
    place of Lemma~\ref{LemCantorAbelianKernel}.  
\end{proof}

To deal with punctures in place of handles,
we first define punctures.  Let $E$ be the end space of~$X$; 
we equip $X\cup E$ with
its standard topology.  
For us, A \defn{puncture} of~$X$ means an end $e$ for which
there exists an embedding of the $2$-disk into $X\cup E$, that sends
the origin to~$e$ and sends no other point into~$E$.

\begin{lemma}
    \label{LemPuncturesTitsAlternativeFails}
    If $X$ has
 infinitely many
    punctures, then its mapping class group
    does not satisfy the Tits Alternative.
\end{lemma}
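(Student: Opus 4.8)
The plan is to mimic the infinite-genus argument of Lemma~\ref{LemGenusTitsAlternativeFails}, replacing the shoes $S_w$ (genus-one pieces) by once-punctured disks, and replacing the homology action by the action of $\Map(X)$ on the end space~$E$ of~$X$. Concretely, since $X$ has infinitely many punctures we may choose, for each $n\geq1$, a compact subsurface $X_n$ of $X-(X_1\cup\cdots\cup X_{n-1})$ with one boundary component that contains exactly $2^n$ punctures; each $X_n$ is then a sphere with $2^n$ open disks removed and $2^n$ points deleted, which we express as the union of $2^n-1$ pairs of pants arranged in the top $n$ levels of~$T$, with a once-punctured disk~$D_w$ glued to the waist of each $P_{n-1,w}$-cuff for $w\in W_n$ (the role played before by the shoes). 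One must check such $X_n$ exist: the punctures form an infinite discrete subset of the end space, so we can repeatedly cut off compact subsurfaces capturing $2^n$ of them at a time; this is the one genuinely surface-topological point, and it is routine.

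First I would run the construction of Section~\ref{SecPuncturesOrHandles} verbatim, but with $S$ replaced by a once-punctured disk, so that for $g\in\{a,b,c,d\}$ we get $\ghat\in\Homeo(X)$ (orientation-preserving if $X$ is orientable) supported on $\bigcup_n X_n$, acting on the $D_w$ by $\ghat(p,w)=(p,g(w))$ and on the pants by~\eqref{EqDefnOfghat}. Set $\Gtilde=\gend{\atilde,\btilde,\ctilde,\dtilde}\sset\Map(X)$ as before.

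Next I would prove the analogue of Lemma~\ref{LemGenusSurjectionToG}: the image of $\Gtilde$ in the permutation group of the punctures of~$X$ (equivalently, in $\Homeo(E)$) is isomorphic to~$G$. The point is that the puncture sitting inside $D_w$ is carried by $\ghat$ to the puncture in $D_{g(w)}$, so $\ahat,\bhat,\chat,\dhat$ permute these punctures exactly as $a,b,c,d$ permute $W-\{\emptyset\}$; since $G$ acts faithfully on its tree, hence on this orbit of vertices, we get a surjection $\Gtilde\to G$ with the same kernel as $\Ghat\to \Homeo(E)$. (The action on $E$ factors through $\Map(X)$, so it does not matter whether we work with $\Ghat$ or $\Gtilde$.)

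Then I would prove the analogue of Lemma~\ref{LemGenusAbelianKernel}: the kernel of $\Gtilde\to G$ is abelian. If $\alpha\in\Ghat$ acts trivially on the punctures, then it fixes each $D_w$ setwise, hence (permuting the $P_{n,w}$) fixes every $P_{n,w}$ setwise. As in the proof of Lemma~\ref{LemCantorAbelianKernel}, after an isotopy $\alpha$ preserves each waist and each rim pointwise; on each pair of pants it is then a product of boundary Dehn twists, and on each once-punctured disk $D_w$ the mapping class group is infinite cyclic, generated by the Dehn twist about $\partial D_w$. So $\alpha$ is isotopic to a homeomorphism supported on a disjoint union $U$ of annuli about these waists, rims, and puncture-boundaries; $\Map(U)$ is a product of copies of~$\Z$, hence abelian, giving the claim. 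With the abelian (in particular solvable) kernel in hand, Lemma~\ref{LemSolvableKernel} applied to $\Gtilde\to G$ together with Lemma~\ref{LemGrigorchuk} shows $\Gtilde$ is not virtually solvable and contains no nonabelian free group, so $\Map(X)$ fails the Tits alternative.

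The only real obstacle is the first step — verifying that a surface with infinitely many punctures contains the required disjoint tower of compact subsurfaces $X_n$, each grabbing $2^n$ punctures and nothing else topologically exotic (no genus, no other ends). This follows because the punctures are isolated planar ends: around any finite set of them one can find a compact planar subsurface with a single boundary curve containing exactly those punctures, and having used up finitely many punctures one still has infinitely many left in the complement, so the inductive construction never stalls. Everything after that is a line-by-line transcription of Section~\ref{SecPuncturesOrHandles} with "handle" replaced by "puncture" and "$H_1$" replaced by "end space".
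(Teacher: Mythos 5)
Your proposal is correct and follows essentially the same route as the paper's own proof sketch: disjoint punctured disks $X_n$ containing $2^n$ punctures each, decomposed into pairs of pants with once-punctured disks in place of the shoes, the surjection $\Gtilde\to G$ read off from the permutation action on ends/punctures, and the abelian-kernel argument via annuli as in Lemmas~\ref{LemCantorAbelianKernel} and~\ref{LemGenusAbelianKernel}. (Only a wording quibble: the $X_n$ are closed but not compact subsets of~$X$, since the punctures are deleted.)
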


\begin{proof}[Proof sketch.]
    Choose an infinite sequence of distinct ends~$e_n$, and 
    disks in~$X\cup E$ centered at them, as above.  By shrinking
    the $n$th disk, we may
    suppose it is disjoint from its predecessors.
    So any two of the disks are disjoint.
    With this preparation, it is easy to construct a sequence 
    of disjoint closed sets $X_1,X_2,\dots$ in~$X$, such that $X_n$ is
    a disk with $2^n$ punctures.  We decompose $X_n$ as in the
    infinite-genus case, except that we use  once-punctured disks
    in place of the shoes. 
    The surface
    automorphisms
    $\ahat,\bhat,\chat,\dhat$ and the group $\Ghat$ they generate
    are defined as before.  One
    replaces Lemma~\ref{LemGenusSurjectionToG}
    by an analysis of how $\Ghat$ permutes the ends of~$X$.  This is 
    simpler than Lemma~\ref{LemGenusSurjectionToG}
    and similar to what we did in Section~\ref{SecCantorSet}.
    The only difference is that the ends of $\bigcup_n X_n$ are indexed
    by the vertices of~$T$, whereas there 
    the ends of $X_\emptyset$ were indexed by
    the ends of~$T$.  The rest of the proof is the same as in the
    infinite-genus case.
\end{proof}

\section{The main results}
\label{SecMainTheorem}

\noindent
Theorem~\ref{ThmMain} is the union of Lemmas \ref{LemCantorTitsAlternativeFails},
\ref{LemGenusTitsAlternativeFails} and~\ref{LemPuncturesTitsAlternativeFails}.
For Corollary~\ref{CorMain}, suppose $X$ has
infinite type but only finitely many boundary
components. The
theorem below shows that $X$ has one of the features
\eqref{CaseGenus}--\eqref{CaseCantor}
in Theorem~\ref{ThmMain}.  Quoting that theorem shows that
$\Map(X)$ does not satisfy the Tits Alternative.
Lanier and Loving used the same trichotomy  
to construct the surfaces
mentioned in the introduction, whose mapping class groups
fail to satisfy
the \emph{strong} Tits Alternative.

\begin{theorem}
    \label{ThmTrichotomy}
    Suppose $X$ is an infinite-type surface with only finitely
    many boundary components.  Then $X$ has infinite genus,
    or infinitely many punctures, or contains a closed subset
    homeomorphic to a disk with a Cantor set removed from its
    interior.
\end{theorem}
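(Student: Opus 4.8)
The plan is to prove the contrapositive: assuming $X$ has finite genus, only finitely many punctures, and only finitely many boundary components, I will produce a closed subsurface homeomorphic to $D^2-C$. First a reduction. By Dyck's theorem a nonorientable surface with infinitely many crosscaps contains arbitrarily many disjoint handles, hence has infinite genus; so ``finite genus'' also lets us assume $X$ has only finitely many crosscaps. Then the end space $E=\Ends(X)$ must be infinite: otherwise $X$, having finite genus, finitely many crosscaps, finitely many boundary components and finitely many ends, would be of finite type.

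The crux is an observation about $E$. Because $X$ has only finitely many handles and crosscaps, every \emph{isolated} end $e$ of $X$ has a neighborhood in $X\cup E$ that is a planar orientable surface with compact boundary whose only end is~$e$; shrinking it, $e$ has a neighborhood homeomorphic to a once-punctured open disk, whose end compactification is an open disk. So every isolated end of $X$ is a puncture in the sense of Section~\ref{SecPuncturesOrHandles}. Since there are only finitely many punctures, $E$ has only finitely many isolated points. Deleting them leaves a set $E_0\sset E$ that is closed (hence compact), infinite, and \emph{perfect}: a point isolated in $E_0$ would have a punctured neighborhood in $E$ meeting $E$ in finitely many points, so would already be isolated in $E$. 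A nonempty compact perfect second-countable totally disconnected space is a Cantor set, so $E$ contains a Cantor set.

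It remains to localize such a Cantor set of ends inside a planar, orientable region of $X$ with a single boundary circle. I would choose a connected compact subsurface $K\sset X$ containing $\partial X$, all handles, all crosscaps and all (finitely many) punctures, arranged so that every component of $X-\mathrm{int}(K)$ has connected boundary (the standard ``fill in the complementary pieces'' construction). The sets $\Ends(V)$, as $V$ ranges over the finitely many components of $X-\mathrm{int}(K)$, partition the infinite set $E$, so some $V$ has $\Ends(V)$ infinite. Since $\Ends(V)$ is clopen in $E$ and $E$ has no isolated points lying off $K$, this $\Ends(V)$ is a nonempty compact perfect second-countable totally disconnected space, i.e.\ a Cantor set. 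This $V$ is closed in~$X$, avoids all handles and crosscaps (hence is planar and orientable), has exactly one boundary circle, and has end space a Cantor set; by the classification of surfaces it is homeomorphic to $D^2-C$, as required.

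The step I expect to be most delicate is the bookkeeping around the compact core: choosing $K$ connected so that the complementary components have connected boundary, and, if one allows noncompact boundary components, handling those correctly (for instance by absorbing a line boundary component into~$K$). The genuinely substantive point is the end-space dichotomy of the second paragraph — under the hypotheses every isolated end is a puncture, so finitely many punctures forces $E$ to contain a perfect, hence Cantor, subset — and this is what makes the trichotomy work.
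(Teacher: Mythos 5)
Your overall strategy is the one the paper follows (show there are only finitely many isolated ends because isolated ends away from the boundary are punctures, conclude the end space contains a perfect hence Cantor subset, then cut off a planar piece with one boundary circle and Cantor end space), but two steps fail as written, and both concern noncompact boundary components. First, the claim that \emph{every} isolated end of $X$ is a puncture is false when a boundary line of $X$ runs out to that end. For example, let $X$ be a closed half-plane with a Cantor set removed from a compact subset of its interior: it has finite genus, one boundary component, no punctures at all, and its isolated end ``at infinity'' lies in $\overline{\partial X}$; no embedding of $D^2$ into $X\cup E$ can send the origin to that end (invariance of domain forces the image of a small open disk around the origin to be open in a half-plane chart, yet contain a boundary point), and likewise no neighborhood of that end is a once-punctured disk with compact boundary, because the boundary line enters every neighborhood. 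This is exactly why Lemma~\ref{LemEndRecognition} assumes $e$ lies outside $\overline{\partial X}$, and why the paper's proof separately counts $E\cap\overline{\partial X}$. Your conclusion (finitely many isolated ends) is still correct, but only because finitely many boundary components contribute at most finitely many ends to $\overline{\partial X}$; that extra count has to be made explicit, since it is the only place the boundary hypothesis enters.

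Second, the compact core step does not survive noncompact boundary: if some component of $\partial X$ is a line, no compact $K$ contains $\partial X$, and your proposed repair (``absorbing a line boundary component into $K$'') contradicts compactness; also a compact $K$ cannot literally ``contain all punctures,'' since punctures are ends, and the assertion that $\Ends(V)$ contains no isolated points of $E$ needs an argument -- you must choose the cutting circles so that each of the finitely many isolated ends, and each end in $\overline{\partial X}$, is separated from the component $V$ you keep. The paper sidesteps all of this: after splitting off the handles and crosscaps it chooses a single embedded circle in the planar summand separating one non-isolated end from $\partial X$ and from the finitely many isolated ends; the piece cut off then automatically avoids $\partial X$, has exactly one boundary circle, and has perfect end space. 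Your argument can be repaired along those lines (cut the bad ends and the boundary away with circles rather than enclosing them in a compact core), and with those repairs the rest of your reasoning -- perfectness of the end set after deleting finitely many isolated points, and the classification of planar orientable surfaces with one boundary circle and Cantor end space -- agrees with the paper's proof of Theorem~\ref{ThmTrichotomy}.
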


We call a surface~$X$ \defn{planar} if  every circle 
embedded in $X-\partial X$
is separating.

\begin{lemma}
    \label{LemEndRecognition}
    Suppose $X$ is a planar surface and $E$ its space of ends.
    Suppose $e$ is an isolated point of~$E$ and lies outside the
    closure of $\partial X$ (in $X\cup E$).  Then $e$ is a puncture.
\end{lemma}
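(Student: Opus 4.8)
The plan is to exhibit an embedded $2$-disk in $X\cup E$ whose origin maps to $e$ and whose other points all land in $X$. The starting point is the definition of the end space: since $e$ is isolated in $E$, there is a clopen neighborhood of $e$ in $E$ equal to $\{e\}$, and correspondingly there is a connected open subset $V$ of $X$ with compact, connected boundary (a single embedded circle, or a finite union of circles and boundary arcs) such that $V\cup\{e\}$ is a neighborhood of $e$ in $X\cup E$ whose only end is $e$, and such that $\overline V$ is disjoint from $\partial X$. Here I use the standard fact that a surface has a neighborhood basis at each end consisting of such ``complementary components of compact subsurfaces''; shrinking using that $e$ is isolated lets me take the neighborhood to contain no other ends, and using that $e$ lies outside the closure of $\partial X$ lets me take $\overline V\cap\partial X=\emptyset$, so $\partial V$ is a disjoint union of circles embedded in $X-\partial X$.

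Next I would cut down $\partial V$ to a single circle. Since $X$ is planar, every component of $\partial V$ is separating in $X$. If $\partial V$ has more than one component, then $V$, being connected with boundary a disjoint union of separating circles and having a single end, can be replaced by a smaller neighborhood of $e$ bounded by just one of those circles: concretely, among the components of $\partial V$ exactly one, call it $\gamma$, has $e$ ``on its far side,'' i.e.\ $X\smallsetminus\gamma$ has a component $V'$ with $V'\cup\{e\}$ a neighborhood of $e$ containing no other end; this uses planarity to know $\gamma$ separates and uses that $e$ is the unique end of $V$. So without loss of generality $\partial V=\gamma$ is a single circle, $\overline{V'}=V'\cup\gamma$ is a connected surface with one boundary circle and one end $e$, and $\overline{V'}$ is disjoint from $\partial X$.

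Now $Y:=V'\cup\{e\}$ is a surface-like space: it is a one-ended planar surface with one boundary circle, with the end filled in by a point. The key step is to identify $Y$ with the closed $2$-disk. The surface $V'$ is connected, planar (circles in it, being circles in $X-\partial X$, are separating), has exactly one boundary component, and has exactly one end, which is a puncture-type end in the sense that $V'\cup\{e\}$ is the one-point end-compactification near $e$; by the classification of surfaces (Ker\'ekj\'art\'o--Richards), a connected planar surface with one boundary circle and a single puncture end is homeomorphic to the half-open annulus $S^1\times[0,1)$, and filling in the end recovers $S^1\times[0,1]/(S^1\times\{0\})\cong D^2$. This gives a homeomorphism $h:D^2\to Y\sset X\cup E$ sending $0$ to $e$; since $h^{-1}(E)\cap Y=\{e\}$, the map $h$ sends only the origin into $E$. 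Thus $e$ is a puncture.

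The main obstacle is the second step: making rigorous the claim that I may shrink the neighborhood $V$ of $e$ to one bounded by a single circle disjoint from $\partial X$. This requires combining (a) the existence of a neighborhood basis at $e$ built from complementary components of compact exhausting subsurfaces, (b) the hypothesis that $e$ is isolated, to discard all other ends, (c) the hypothesis that $e\notin\overline{\partial X}$, to push the bounding curves off the boundary, and (d) planarity, to guarantee each bounding curve separates so that exactly one of them ``faces'' $e$ and the others can be deleted. Each ingredient is standard, but assembling them cleanly — especially choosing the single curve $\gamma$ canonically and verifying $\overline{V'}$ is a surface with exactly the claimed boundary and end — is where the real work lies; the final identification with $D^2$ is then an immediate appeal to the surface classification.
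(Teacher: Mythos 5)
Your overall strategy---shrink to a one-ended subsurface avoiding $\partial X$, recognize it as a half-open annulus, fill in the end---is the natural one and is surely what the paper's one-word proof has in mind; but the step you yourself flag as the crux, cutting $\partial V$ down to a single circle, does not work as you propose. If $V'\cup\{e\}$ is to be a neighborhood of $e$, then $V'$ must be the component of $X-\gamma$ containing $V$ (since $V$ is connected, misses $\gamma$, and accumulates at $e$), so $V'$ \emph{contains} $V$: it is a larger set, not a smaller one, and it can acquire extra ends and boundary. Concretely, let $X=\R^2-\{p,q\}$, let $e$ be the end at infinity, and let $V$ be the complement of two disjoint closed disks $D_p\ni p$ and $D_q\ni q$; this is a legitimate output of your first step (its closure has exactly one end and two frontier circles), yet for either choice $\gamma=\partial D_p$ or $\gamma=\partial D_q$ the component of $X-\gamma$ on the side of $e$ also has the other puncture among its ends. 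So \emph{no} component of $\partial V$ has the property you assert (and ``exactly one'' can also fail in the other direction, e.g.\ for $X=\R^2$ both circles work). The single circle you need is a \emph{new} curve enclosing all of $\partial V$, and producing it is genuinely the content of the lemma, not a formal consequence of each $\gamma_i$ being separating.

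Two ways to repair this. (a) Take a compact connected subsurface $L$ of $\overline{V}$ containing $\partial V$ and show that the unique noncompact component of $\overline{V}-L$ meets $L$ along a single circle; this requires a real argument (an ``outer boundary''/unicoherence-type argument), not just planarity of the individual curves. (b) Bypass the single-circle step entirely: cap each frontier circle of $\overline{V}$ with a disk; the result is a boundaryless, one-ended, planar surface, hence homeomorphic to $\R^2$ by Richards' classification, so $\overline{V}$ is homeomorphic to $\R^2$ minus finitely many disjoint open disks, and $\overline{V}\cup\{e\}$ is homeomorphic to $S^2$ minus those disks with $e$ an interior point; a small closed disk about $e$ then gives the required embedding, after the check (which you do note) that the subspace topology on $\overline{V}\cup\{e\}\subset X\cup E$ agrees with the end-compactification topology of $\overline{V}$. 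Route (b) has the added advantage of not invoking any classification of surfaces \emph{with} boundary, which the paper explicitly declines to rely on; your appeal to ``Ker\'ekj\'art\'o--Richards'' for a surface with a boundary circle should be replaced by this capping reduction in any case.
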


\begin{proof}
Straightforward.
\end{proof}

\begin{proof}[Proof of Theorem~\ref{ThmTrichotomy}]
    The arguments
    are like those of Richards \cite{Richards} in his proof of Ker\'ekj\'art\'o's
    theorem classifying noncompact surfaces.  We would like to quote his results,
    but they do not allow boundary.  A generalization due to
    Prishlyak and Mischenko \cite{PrishlyakMischenko} allows boundary, but
    I could not follow it.

    If $X$ is nonorientable, then we can find an embedded M\"obius band $M_1$.
    If $X-M_1$ is nonorientable, then it contains an embedded M\"obius band
    $M_2$.  Repeating this process with $X-(M_1\cup M_2)$, and so on, we find 
    a sequence of mutually disjoint embedded M\"obius bands. Suppose
    first that this construction does not terminate, so 
    the sequence is infinite.    The
    connected sum of three crosscaps is the same as the connected sum of a
    torus and a single crosscap,  so $X$ has infinite genus.
    
    This leaves the case that the sequence terminates.  
    Then $X$ is the connected sum of an orientable surface 
    $X_1$ and finitely many crosscaps.  If the genus of
    $X_1$ is infinite 
    then we are done, so suppose it is finite.
    Gathering the handles and crosscaps together, we see that 
    $X$ is the connected sum of a planar surface $X_2$ and
    a closed surface.  Because $X$ has infinite type, $X_2$ does too.

    We write $E$ for the end space of~$X$, which is also the
    end space of~$X_2$.   It is infinite, or else
    $X_2$ would have finite type.
    Its intersection
    with $\overline{\partial X}$ (the closure taken in $X\cup E$) is
    finite, because $X$ has finitely many boundary
    components, each with at most two ends.  So $E-\overline{\partial X}$
    is infinite.   The finiteness of
    $E\cap\overline{\partial X}$ also shows that
    a point of $E-\overline{\partial X}$ is isolated in
    $E-\overline{\partial X}$ if and only if it is isolated in~$E$.
    So we may speak unambiguously of isolated ends.  

    If there are infinitely many isolated points of $E-\overline{\partial X}$,
    then we are done because Lemma~\ref{LemEndRecognition} shows that
    each is a puncture.  So suppose otherwise.
    Because $E$ is infinite,  $E-\overline{\partial X}$ has a
    non-isolated point~$e$.  Choose an embedded
    circle $C\sset X_2$ that separates $e$ from
    $\partial X_2$ and the isolated ends. 
    Of the two components of $X_2-C$, one has $e$ among its ends.
    Write $D$ for the union of that component and~$C$, and
    write $E_0$ for the end space of~$D$.
    Because $E_0$ has no isolated points, it
    is perfect and therefore a Cantor set.  Because $X_2$ is planar,
    a standard subdivision-into-pairs-of-pants argument shows that $D$ is a disk 
    with a Cantor set removed from its interior.
\end{proof}

If $X$ has infinitely many boundary components, then our constructions
may fail.  The reason is that the homeomorphisms used to define
$\Map(X)$ must fix $\partial X$ pointwise, so that they cannot permute
the components of~$\partial X$.  
But even if one allowed permutations in some way,
some surfaces would still escape our methods:

\begin{example}
    Let $D_{n\in\Z}$ be the closed disk in~$\R^2$ with radius~$1/3$
    and center $(n,0)$.  Let $C_n$ be a Cantor set in $\partial D_n$.
    Let $X$ be the plane, minus the union of the~$C_n$ and
    the interiors of the~$D_n$.  Then $X$ has countably many boundary
    components~$B_{k\geq1}$, each a copy of~$\R$.  From $X$ remove
    $k$ points from each~$B_k$, and write~$Y$ for what remains.  Then
    every self-homeomorphism of~$Y$ sends every component of~$\partial Y$
    to itself.  
    (The key step is that every self-homeomorphism of~$Y$ sends 
    every $B_k$ to itself.  To see this, say that two components
    of~$\partial Y$ \emph{abut} if some end of $Y$ lies in both
    their closures.  Under the equivalence relation this generates,
    two components of $\partial Y$ are equivalent if and only if they
    lie in the same~$B_k$.  Every self-homeomorphism preserves the
    unique equivalence class of each size~$k>1$, hence every $B_k$.)
    It seems likely that the central quotient of $\Map(Y)$ is the 
    end-preserving subgroup of $\Map(Y-\partial Y)$, ie 
    a version of the pure braid group on infinitely many strands.
\end{example}

\end{document}